\documentclass[a4paper,10pt]{amsart}


\usepackage{amssymb,amsmath,amsfonts,amsthm,mathtools}
\usepackage{latexsym,mathrsfs,pb-diagram}
\usepackage[pdftex]{graphicx}
\usepackage[all]{xy}
\usepackage[hmargin=2.5cm,vmargin=3.5cm]{geometry}
\usepackage[plainpages=false,colorlinks,pdfpagelabels]{hyperref}
\hypersetup{
  urlcolor=black,
  citecolor=green,
  linkcolor=blue
}

\numberwithin{equation}{section}

\theoremstyle{definition}
\newtheorem{defn}{Definition}[section]

\theoremstyle{remark}

\newtheorem{rmk}[defn]{Remark}

\theoremstyle{plain}
\newtheorem{prop}[defn]{Proposition}
\newtheorem{cor}[defn]{Corollary}
\newtheorem{thm}[defn]{Theorem}
\newtheorem{lem}[defn]{Lemma}



\newcommand{\Z}{\mathbb{Z}}
\newcommand{\R}{\mathbb{R}}

\newcommand{\C}{\mathbb{C}}







\newcommand{\A}{\mathcal{A}}
\newcommand{\G}{\mathrm{G}}


\newcommand{\N}{\mathrm{N}}
\newcommand{\m}{\mathrm{m}}
\newcommand{\F}{\mathcal{F}}
\newcommand{\M}{\mathcal{M}}
\newcommand{\T}{\mathrm{T}}

\renewcommand{\>}{\right\rangle}
\renewcommand{\L}{\mathcal{L}}
\renewcommand{\epsilon}{\varepsilon}
\renewcommand{\d}{\mathrm{d}}
\newcommand{\Ci}{{\mathcal{C}}^{\infty}}

\newcommand{\Cm}{\mathcal{C}^\mathcal{M}}
\newcommand{\WF}{\mathrm{WF}}

\newcommand{\Char}{\mathrm{Char}}
\renewcommand{\Re}{\mathrm{Re}}
\renewcommand{\Im}{\mathrm{Im}}

\author{Nicholas Braun Rodrigues}
\address{University of S{\~a}o Paulo, IME-USP, S{\~a}o Paulo, SP, Brazil}
\email{braun@ime.usp.br}
\author{Antonio V. da Silva Jr.}
\address{University of S{\~a}o Paulo, IME-USP, S{\~a}o Paulo, SP, Brazil}
\email{avictor@ime.usp.br}
\thanks{This work was supported by CNPq, Conselho Nacional de Desenvolvimento Cient{\'i}fico e Tecnol{\'o}gico.}
\keywords{Denjoy-Carleman approximate solutions, quasianalytic classes, Denjoy-Carleman Wave-Front set.} 
\subjclass[2010]{35F20 (primary), 35A18, 35B65 (secondary)}


\title [Approximate solutions of vector fields and Denjoy-Carleman regularity]
{Approximate solutions of vector fields and an application to Denjoy-Carleman regularity of solutions of a nonlinear PDE}
\begin{document}

\begin{abstract} 
In this paper we study microlocal regularity of a $\mathcal{C}^2$ solution $u$ of the equation
\begin{equation*}
u_t = f(x,t,u,u_x),
\end{equation*}
where $f(x,t,\zeta_0, \zeta)$ is ultradifferentiable  in the variables $(x,t)\in \R^{N} \times \R$ and holomorphic in the variables $(\zeta_0,\zeta) \in \C \times \C^{N}$. 
We proved that if $\Cm$ is a regular Denjoy-Carleman class (including the quasianalytic case) then:
\begin{equation*}
\WF_\M (u)\subset \Char(L^u),
\end{equation*}
where $\WF_\M(u)$ is the Denjoy-Carleman wave-front set of $u$ and $\Char(L^u)$ is the characteristic set of the linearized operator $L^u$: 
\begin{equation*}
L^u = \dfrac{\partial}{\partial t} - \sum_{j=1}^{N}\frac{\partial f}{\partial\zeta_j}(x,t,u,u_x)\dfrac{\partial}{\partial x_j}.
\end{equation*}

\end{abstract}

\maketitle

\section{Introduction}

Let $\Omega^\prime\subset \R^N \times \R$ and $\Omega^{\prime\prime} \subset \C \times \C^N$ be open sets and let $f \in \Ci(\Omega^\prime\times\Omega^{\prime\prime})$ be holomorphic with respect to the variables $(\zeta_0,\zeta) \in \C \times \C^N$. 
Suppose that $u\in \mathcal{C}^2(\Omega^\prime)$ is a solution of the nonlinear equation:
\begin{equation*}
u_t = f(x,t,u,u_x),
\end{equation*}
and consider the linearized operator:
\begin{equation*}
L^u = \dfrac{\partial}{\partial t} - \sum_{j=1}^N\frac{\partial f}{\partial \zeta_j}(x,t,u,u_x)\dfrac{\partial}{\partial x_j}.
\end{equation*}
Many authors have studied the relation between the microlocal regularity of $u$ and the characteristic set of the linearized operator $L^u$ for different assumptions on the regularity of the function $f$ in the variables $(x,t)$. 
In \cite{treves:92} F. Treves and N. Hanges proved that if $f$ is real-analytic in $(x,t)$ then the real-analytic wave front set of $u$ is contained in the characteristic set of $L^u$. 
The $\Ci$ version of this result is a consequence of a result proved by J. Y. Chemin in \cite{chemin:88}, a different proof of it being obtaind by C. H. Asano, in \cite{asano:95}, by adapting Hanges-Treves' techniques.
Later on, R. F. Barostichi and G. Petronilho proved in \cite{petronilho:09} that if $f$ is Gevrey in $(x,t)$ then the same result is valid for the Gevrey wave-front set. 
Finally, Z. Adwan and G. Hoepfner proved in \cite{hoepfner:10} analogous results for strongly non-quasianalytic Denjoy-Carleman classes. 
The main difference between Asano's and Treves-Hanges' proofs is the availability of Cauchy-Kowalevski in the analytic setting while in the $\Ci$ case the proof relies on approximate solutions of vector fields and almost-analytic extensions. 
The main difficulty in the Gevrey and in the strongly non-quasianalytic case is to find a suitable approximate solution that belongs to the class under consideration. 

In this work we deal with the same problem as in \cite{treves:92}, \cite{asano:95}, \cite{petronilho:09} and \cite{hoepfner:10}, but in the case of regular Denjoy-Carleman classes. 
The only extra hypothesis that we make is that the space of the real-analytic functions is properly contained in the Denjoy-Caleman class under consideration. 
This includes the quasi-analytic case, and in that case, we gain a difficulty: the absence of non-trivial flat functions. 
This is an obstruction for the technique that Asano, Barostichi-Petronilho and Adwan-Hoepfner used in their proofs. 

Loosely speaking if $u_0$ is a function ($\Ci$, Denjoy-Carleman, Gevrey) in an open set $\Omega$, and $L$ is a vector field in $\Omega\times [-1,1]$, a function $u$ on $\Omega\times[-1,1]$ is an approximate solution of $L$ with initial datum $u_0$ if $u(x,0)=u_0(x)$ and $Lu$ is $t$-flat at $t=0$. 
If our class is quasi-analytic and the approximate solution $u$ belongs to this class, we would have that $Lu$ is actually zero. 
So finding approximate solutions in this case is as difficult as finding solutions of the Cauchy problem with initial datum $u_0$. 
To circumvent this difficulty we have to be able to leave the quasianalytic class, more precisely, we have to construct a suitable approximate solution $u$ that is only a $\Ci$-function and we need a more precise notion of $t$-flatness. 
In fact, in \cite{petronilho:09} and \cite{hoepfner:10} this notion is already used. 
Let $\Cm$ be the Denjoy-Carleman class associated with the sequence $\M=(M_k)_{k=0}^\infty$. 
We say that $Lu$ is $(\M,t)$-flat if 
\[
|Lu(x,t)|\leq \frac{C^{k+1}M_k}{k!}|t|^k, \quad \forall k\in\Z_+,
\]
for some constant $C > 0$. 
The difference here is that if $u$ is in the same class of $u_0$, then so is $Lu$, and using Taylor's formula one obtain the inequality above. 
So the difficult part is to prove the existence of a $\Ci$-approximate solution $u$ such that $Lu$ is $(\M,t)$-flat.
We construct such approximate solution by adapting an extension theorem due to E. M. Dyn'kin presented in \cite{dyn'kin}. 
In that paper Dyn'kin proved that given a $\Cm$-function on an open set $\Omega \subset \R^N$ there exists a suitable almost-analytic extension of $u$ in the complex space, \textit{i.e.} there exists a function $U \in \Ci(\C^N)$ such that $U(x) = u(x)$, $\forall x \in \Omega$, and 
\begin{equation*}
\left|\frac{\partial U}{\partial\bar{z}_j}(z)\right|\leq \frac{C^{k+1}M_k}{k!}|\Im \, z|^k, \qquad k\in\Z_+, \quad j=1,2,\dots,N.
\end{equation*}
Stated differently, $U$ is an $(\M,|\Im \, z|)$-approximate solution for the complex $\{\partial/\partial{\bar{z}_j}\}_{j=1}^N$. 
In this paper we adapt Dyn'kin's proof for the case of a vector field of the form
\begin{equation*}
L = \dfrac{\partial}{\partial t} + \sum_{j=1}^Na_j(x,t)\dfrac{\partial}{\partial x_j}.
\end{equation*}
With this in our hands and other results concerning general Denjoy-Carleman functions, such as the characterization of the Denjoy-Carleman wave-front set given by the FBI-transform, we can prove the Hanges-Treves result for general regular Denjoy-Carleman classes.

We organize the paper as follows: in Section \ref{sec:denjoy} we state and prove some results about regular Denjoy-Carleman classes following \cite{dyn'kin}, in Section \ref{sec:approx} we prove the theorem about approximate solutions, Theorem \ref{thm:approx_sol}, and finally in the Section \ref{sec:nonlinear} we use Theorem \ref{thm:approx_sol} to prove the main result of this paper, Theorem  \ref{thm:sol_approx_linearized}, and then applying the same argument of Hanges-Treves in \cite{treves:92} we prove the desired regularity result.

We wish to thank professor P. D. Cordaro for presenting us this problem. We are also grateful to P. D. Cordaro, G. Ara\'ujo and O. Erazo for the fruitful discussion in our weekly seminar, to L. F. Ragognette for his suggestions and to S. F\"{u}rd\"{o}s for his comments during his visit to Brazil.

\section{Denjoy-Carleman classes}
\label{sec:denjoy}
In this section we recall the definitions and some properties of the regular Denjoy-Carleman classes as defined in \cite{dyn'kin}.\\

Let $\mathcal{M} = (M_k)_{k=0}^\infty$ be a sequence of positive real numbers. 
We say that $\mathcal{M}$ is regular if the sequence $(m_k)_{k=0}^\infty$, where $m_k=M_k/k!$, has the following properties:
\begin{enumerate}
\item[a)] $m_0=m_1=1$;
\item[b)] $m_k^2 \leq m_{k-1}m_{k+1},\quad k\geq 1$;
\item[c)] $\sup \left(m_{k+1}/m_k\right)^{1/k}<\infty$;
\item[d)] $\lim_{k\to\infty}m_k^{1/k}=\infty$.
\end{enumerate} 
The conditions a) and b) imply that the sequence $m_k$ is increasing; 
condition c) gives us a constant $c>0$ such that $m_{k+1}\leq c^km_k$, for all $k=0,1,2,\dots$; 
condition b) is often called strong $\log$-convexity. 
If $\Omega \subset \R^N$ is an open set, the space $\Cm(\Omega)$ of ultradifferentiable functions associated to the regular sequence $\M$ is the space of all $\Ci$-functions $f$ such that for every compact $K \subset \Omega$ there is a positive constant $A$ for which the following inequality holds:
\begin{equation*}
\sup_{x\in K}|D^\alpha f(x)|\leq A^{|\alpha|+1}M_{\alpha},\quad \forall \alpha \in \Z_+^N.
\end{equation*}
Thus, setting $M_k = k!^s$, $s > 1$, one obtain the Gevrey classes $\G^s$.
As in \cite{bch} we define the FBI transform of a compactly supported distribution $u$ by
\begin{equation*}
\F[u](x,\xi)=u_y\left(e^{i(x-y)\cdot\xi-|\xi|(x-y)^2}\right).
\end{equation*}
In \cite{furdos} it is proved that a compactly supported distribution $u$ belongs to $\Cm$ if and only if for every compact $K$ there is a positive constant $A$ such that:
\begin{equation*}
|\F[u](x,\xi)|\leq \frac{A^{k+1}M_k}{|\xi|^k}, \quad k \in \Z_+, \; x \in K, \; \xi \in \R^N.
\end{equation*}
This last inequality can be used to microlocalize the notion of $\Cm$ regularity, thus we can define the Denjoy-Carleman wave-front set of a distibution $u$ at a point $x$, denoted by $\WF_\M(u)|_x$, as the complementary set of the $\Cm$-regular directions.
Now we will recall some functions defined in \cite{dyn'kin} that play a crucial role in the proof of the approximate solution result.
\begin{defn}
For each $r>0$ we define:
\begin{equation*}
h_1(r)=\inf_{k\in\Z_+}m_kr^{k-1},
\end{equation*}
\begin{equation*}
h(r)=\inf_{k\in\Z_+}m_kr^{k}.
\end{equation*}
\end{defn}
\begin{rmk}
Note that for $r\geq 1$, we have $h_1(r) = 1$.
\end{rmk}

\begin{prop}\label{prop:h1h}
Let $n\in\Z_+$. 
There are constants $C_1,C_2,Q_1,Q_2>0$ such that
\[
\dfrac{1}{r^n}h_1(r)\leq C_1h_1(Q_1r),\quad\forall r> 0,
\]
\[
\dfrac{1}{r^n}h(r)\leq C_2h(Q_2r),\quad\forall r> 0.
\]
\end{prop}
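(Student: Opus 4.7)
The plan is to reduce both inequalities to a single auxiliary estimate on the sequence $(m_k)$: namely that there exist constants $C_0, Q_0 > 0$ depending only on $n$ and $\M$ such that
\[
m_{j+n} \leq C_0 Q_0^{\,j}\, m_j, \qquad \forall j \in \Z_+.
\]
Once this is in hand, the two claims will follow from a short reindexing of the infima defining $h$ and $h_1$.

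To establish the auxiliary estimate I would invoke property c), which supplies a constant $A \geq 1$ with $m_{k+1}/m_k \leq A^k$ for every $k \geq 1$; the case $k=0$ is trivial since $m_0 = m_1 = 1$ by property a). Telescoping this bound $n$ times,
\[
\frac{m_{j+n}}{m_j} \;=\; \prod_{i=0}^{n-1}\frac{m_{j+i+1}}{m_{j+i}} \;\leq\; \prod_{i=0}^{n-1} A^{j+i} \;=\; A^{\,nj + n(n-1)/2},
\]
so the estimate holds with $Q_0 := A^n$ and $C_0 := A^{n(n-1)/2}$.

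Next, to derive the inequality for $h$, for each $j \in \Z_+$ I would compare the $(j+n)$-th term in the infimum defining $h(r)$ with the $j$-th term in the one defining $h(Q_0 r)$: by the auxiliary estimate,
\[
m_{j+n}\, r^{j+n} \;\leq\; C_0 Q_0^{\,j}\, m_j\, r^{j+n} \;=\; C_0\, r^n \cdot m_j (Q_0 r)^{j}.
\]
Taking the infimum over $j$ gives $h(r) \leq C_0 r^n\, h(Q_0 r)$, proving the second claim with $C_2 = C_0$ and $Q_2 = Q_0$. The argument for $h_1$ is essentially identical, with the same shift $k = j+n$; the only adjustment is that factoring out $Q_0^{j-1}$ instead of $Q_0^{j}$ forces one extra power of $Q_0$ into the constant, so the first claim holds with $Q_1 = Q_0$ and $C_1 = C_0 Q_0$.

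The main (and only) obstacle is the bookkeeping of constants and indices; there is no conceptual difficulty beyond carefully applying property c) exactly $n$ times. In essence the proposition is a formal consequence of the moderate-growth condition c), and the strong log-convexity b) is not needed here.
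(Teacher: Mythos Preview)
Your argument is correct and follows essentially the same route as the paper: iterate property~c) $n$ times to bound $m_{j+n}$ by a constant times $m_j$, then shift the index in the infimum defining $h$ (resp.\ $h_1$). The paper inlines the telescoping directly rather than isolating your auxiliary estimate, but the mechanism and constants are the same.
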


\begin{proof}
Let $k\geq n$. 
\begin{align*}
\dfrac{1}{r^n}m_kr^{k-1}& 
\leq c^{k-1}m_{k-1}r^{k-n-1}\\
&\leq c^{(k-1)+(k-2)+\dots +(k-n)}m_{k-n}r^{k-n-1}\\
&=C_1m_{k-n}(Q_1r)^{k-n-1},
\end{align*}
where the constants $C_1$ and $Q_1$ only depend on $n$. 
Then:
\begin{align*}
\dfrac{1}{r^n}h(r)&=\dfrac{1}{r^n}\inf_{k\in\Z_+}m_kr^{k-1}\\
&\leq \inf_{k\geq n}m_kr^{k-n-1}\\
&\leq C_1\inf_{k-n\geq 0}m_{k-n}(Q_1r)^{k-n-1}\\
&=C_1h(Q_1r).
\end{align*}
The proof for the function $h_1$ is analogous. 
\end{proof}

\begin{defn}
For $r>0$ we define:
\begin{equation*}
\N(r)=\min\{n\,:\,h_1(r)=m_nr^{n-1}\}.
\end{equation*}
\end{defn}
\begin{prop}
There exists a subsequence $(m_{n_k})_{k=1}^\infty$ such that:
\[\N\!\left(\dfrac{m_{n_k}}{m_{n_k+1}}\right) = n_k.\]
\end{prop}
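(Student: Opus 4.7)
The plan is to compute $\N(\rho_n)$ for $\rho_n := m_n/m_{n+1}$ and then exhibit infinitely many $n$ for which this value equals $n$. The key observation is that at $r = \rho_n$ the two terms $m_n r^{n-1}$ and $m_{n+1} r^n$ coincide by direct substitution, so both $k = n$ and $k = n+1$ are candidates for the infimum defining $h_1(\rho_n)$; the real question is whether some $k < n$ can beat them, and this turns out to be controlled by the behaviour of the ratio sequence $r_k := m_{k+1}/m_k$.

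First I would observe that property~(b) says exactly that $(r_k)$ is non-decreasing, and that property~(d) then forces $r_k \to +\infty$: otherwise $r_k \leq C$ would give $m_k \leq C^k$, contradicting $m_k^{1/k} \to \infty$. In particular, $(r_k)$ cannot be eventually constant, so the set
\[
S := \{n \geq 1 : r_{n-1} < r_n\}
\]
is infinite.

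Next I would compute, for $r = \rho_n = 1/r_n$, the ratios
\[
\frac{m_n\, r^{n-1}}{m_k\, r^{k-1}} = \prod_{j=k}^{n-1} \frac{r_j}{r_n} \ (k \leq n), \qquad \frac{m_{n+1}\, r^n}{m_k\, r^{k-1}} = \prod_{j=n+1}^{k-1} \frac{r_n}{r_j} \ (k \geq n+1).
\]
Monotonicity of $(r_j)$ makes both products $\leq 1$, so $h_1(\rho_n) = m_n \rho_n^{n-1}$ and the infimum is attained at $k = n$. For $k < n$ the first product equals $1$ if and only if $r_j = r_n$ for every $j \in \{k, \ldots, n-1\}$; taking $j = n-1$, this shows that no $k < n$ is a minimizer precisely when $r_{n-1} < r_n$. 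Hence $\N(\rho_n) = n$ if and only if $n \in S$, and any enumeration $(n_k)_{k=1}^\infty$ of $S$ provides the desired subsequence.

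The step I expect to be the most delicate is the equality analysis in the product identity above, since one needs to rule out every index $k < n$ rather than only $k = n-1$; however, once log-convexity is translated into monotonicity of $(r_k)$, the telescoping factorisation reduces the whole matter to checking a single strict inequality at $j = n - 1$.
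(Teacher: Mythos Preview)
Your proof is correct and follows essentially the same approach as the paper: both translate log-convexity into monotonicity of the ratio sequence $r_k=m_{k+1}/m_k$ and use the telescoping identity $m_n r^{n-1}/m_k r^{k-1}=\prod_{j=k}^{n-1}(r_j/r_n)$ to locate the minimizer of $h_1(\rho_n)$. The paper first treats the strictly log-convex case and then says one passes to ``the least subsequence of $(m_n)$ which is strictly log-convex''; your version is a bit tighter because it identifies this subsequence explicitly as $S=\{n\geq 1:r_{n-1}<r_n\}$ and supplies the argument (from property~(d)) that $S$ is infinite, which the paper leaves implicit.
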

\begin{proof}
We first assume $m_n^2 < m_{n-1}m_{n+1}$ for all $n \in \Z_+ \setminus \{0\}$, then for each such $n$ we shall prove: 
\[
\N(r) = n, \qquad \dfrac{m_n}{m_{n+1}} \leq r < \dfrac{m_{n-1}}{m_n}.
\]
Let $k < n$ be a non-negative integer, we have:
\[
m_kr^{k-1} = \dfrac{m_k}{m_{k+1}} \dfrac{m_{k+1}}{m_{k+2}}\cdots \dfrac{m_{n-1}}{m_n}m_nr^{k-1} > m_nr^{n-1},
\]
by our assumption on $(m_n)_{n=0}^\infty$.
Thus $h_1(r) \leq m_nr^{n-1} < m_kr^{k-1}$, and $\N(r) \geq n$.
On the other hand, for each non-negative integer $j > n$ we have:
\[
m_nr^{n-1} = \dfrac{m_n}{m_{n+1}}\dfrac{m_{n+1}}{m_{n+2}} \cdots \dfrac{m_{j-1}}{m_j}m_jr^{n-1} \leq m_jr^{j-1}.
\]
Therefore $\N(r) = n$. 
In particular, we have $\N\!\left(m_n/m_{n+1}\right)=n$.
In the general case one has to take the least subsequence of $(m_n)$ which is strictly log-convex.
\end{proof}
\begin{cor}
The function $\N$ is a decreasing step function such that $\N(r)=0$ for every $r\geq 1$ and $\lim_{r\to 0}\N(r)=\infty$.
\end{cor}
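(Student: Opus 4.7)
The structural work is essentially already done in the proof of the previous proposition, and I would organize the argument around extracting its full strength. In the strictly log-convex case, the two comparisons displayed in that proof ($m_k r^{k-1} > m_n r^{n-1}$ for $k < n$, and $m_j r^{j-1} \geq m_n r^{n-1}$ for $j > n$) are valid throughout the whole interval, so one in fact obtains
\[
\N(r) = n \qquad \text{for every } r \in \left[\frac{m_n}{m_{n+1}},\, \frac{m_{n-1}}{m_n}\right), \quad n \geq 1.
\]
I would make this stronger conclusion explicit. Since strong log-convexity (condition~b)) forces $m_n/m_{n+1}$ to be strictly decreasing in the strictly log-convex case, these intervals are pairwise disjoint and appear successively from right to left as $n$ grows; this simultaneously exhibits $\N$ as piecewise constant with integer values (a step function) and as decreasing.

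For $\N(r) = 0$ when $r \geq 1$, I would compute directly from the definitions. By a) and b), the sequence $(m_k)$ is non-decreasing with $m_0 = m_1 = 1$, so for $r \geq 1$ and every $k \geq 1$ one has $m_k r^{k-1} \geq m_k \geq 1 \geq 1/r = m_0 r^{-1}$. Thus the infimum defining $h_1(r)$ is attained at $k = 0$, forcing $\N(r) = 0$.

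For $\lim_{r \to 0} \N(r) = \infty$, it is enough to show the left endpoints $m_n/m_{n+1}$ tend to $0$, i.e., $m_{n+1}/m_n \to \infty$. Log-convexity makes $m_{n+1}/m_n$ non-decreasing; if it were bounded by some $A$, iteration would give $m_n \leq A^n$, hence $m_n^{1/n} \leq A$, contradicting d). Therefore the intervals recede to the origin and $\N(r)$ must diverge.

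Finally, when $(m_n)$ is only weakly log-convex, I would transfer the argument to the least strictly log-convex subsequence, as indicated at the end of the previous proof; some of the intervals collapse (so $\N$ may skip certain integer values), but the three claimed properties carry over unchanged. The only mildly delicate point is this bookkeeping in the degenerate case; the rest of the argument is a direct unpacking of the previous proposition.
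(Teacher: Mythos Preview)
The paper states this Corollary without proof, treating it as immediate from the preceding Proposition; your proposal correctly supplies the missing details and is in the spirit of what the paper intends. In particular, you rightly observe that the comparisons in the Proposition's proof actually give the full interval statement $\N(r)=n$ on $[m_n/m_{n+1},\,m_{n-1}/m_n)$ in the strictly log-convex case, from which the step-function and monotonicity claims follow at once; your use of condition~d) to force $m_{n+1}/m_n\to\infty$ is the natural way to obtain $\lim_{r\to 0}\N(r)=\infty$, and the direct computation for $r\ge 1$ is fine. One small remark: your computation yields $h_1(r)=1/r$ for $r\ge 1$ (with the infimum attained at $k=0$), which is at odds with the paper's Remark asserting $h_1(r)=1$; this is a minor inconsistency in the paper's conventions, but under either reading the minimum index is the smallest element of $\Z_+$, so $\N(r)=0$ as claimed. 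The handling of the weakly log-convex case via the strictly log-convex subsequence is exactly what the paper gestures at and is adequate here.
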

\begin{lem}\label{lem:mkrk}
Let $r>0$. 
If $n\leq k\leq\N(r)$, then:
\[
m_kr^k \leq m_nr^n.
\]
\end{lem}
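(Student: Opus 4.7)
The plan is to reduce the claim to showing that the sequence $g(k) := m_k r^k$ is non-increasing on the range $\{0, 1, \ldots, \N(r)\}$. Once that monotonicity is in hand, the asserted inequality $m_k r^k \leq m_n r^n$ for $n \leq k \leq \N(r)$ follows immediately by chaining $g(k) \leq g(k-1) \leq \cdots \leq g(n)$.

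The key structural input is the strong log-convexity (b), which is exactly the statement that the ratios $\rho_k := m_{k+1}/m_k$ form a non-decreasing sequence in $k$. For the auxiliary sequence $f(k) := m_k r^{k-1}$ entering the definition of $h_1(r)$, one has $f(k+1)/f(k) = \rho_k r$, so the monotonicity of $(\rho_k)$ forces $f$ to be first non-increasing and then non-decreasing in $k$ (as soon as $\rho_k r \geq 1$ occurs, it persists). I would then unwind the definition of $\N(r)$: it is the smallest integer at which $f$ attains its infimum $h_1(r)$. Assuming $\N(r) \geq 1$, the comparison $f(\N(r)) \leq f(\N(r)-1)$ yields $\rho_{\N(r)-1}\, r \leq 1$, and by the non-decreasing nature of the ratios this propagates to $\rho_j r \leq 1$ for every $0 \leq j \leq \N(r)-1$. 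Multiplying through by $r^j$ rewrites this as $m_{j+1} r^{j+1} \leq m_j r^j$, i.e., $g(j+1) \leq g(j)$, which is precisely the monotonicity of $g$ on $\{0, \ldots, \N(r)\}$ that reduces the lemma to a telescoping.

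I do not expect any serious obstacle here. The only mild care needed is for the trivial edge case $\N(r) = 0$, where the hypothesis forces $n = k = 0$ and there is nothing to prove, and for noticing that, although the previous proposition invoked strict log-convexity to pin down an explicit formula for $\N$, the present argument needs only the non-strict condition (b) together with the defining minimality property of $\N(r)$.
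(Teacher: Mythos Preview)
Your argument is correct. Both proofs rest on log-convexity and on the defining minimality of $\N(r)$, but they deploy these ingredients differently. The paper argues in one stroke: writing $N=\N(r)$, it invokes the interpolation inequality $m_k^{\,N-n}\le m_n^{\,N-k}m_N^{\,k-n}$ (a direct consequence of (b)), multiplies through by the appropriate power of $r$, and then replaces the factor $m_N r^{N-1}=h_1(r)$ by the larger quantity $m_n r^{n-1}$ to collapse everything to $(m_k r^k)^{N-n}\le (m_n r^n)^{N-n}$. Your route instead linearizes log-convexity into monotonicity of the ratios $\rho_k=m_{k+1}/m_k$, extracts $\rho_{\N(r)-1}r\le 1$ from $f(\N(r))\le f(\N(r)-1)$, and propagates this downward to conclude that $g(k)=m_k r^k$ is non-increasing on $\{0,\dots,\N(r)\}$. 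Your approach is slightly more transparent and yields the stronger intermediate fact that the whole sequence $g$ is monotone on that range (not just the single comparison $g(k)\le g(n)$); the paper's approach is more compact and avoids any case distinction on $\N(r)=0$, though it tacitly needs $N>n$ to take the $(N-n)$-th root, an edge case it does not mention.
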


\begin{proof}
Let $n\leq k\leq \N(r)$. Condition b) implies that:
\[
m_k^{\N(r)-n}\leq m_n^{\N(r)-k}m_{\N(r)}^{k-n}.
\]
Thus:
\begin{align*}
\left(m_kr^k\right)^{\N(r)-n}&\leq m_n^{\N(r)-k}r^{\N(r)n+k-n-kn}\left(m_{\N(r)}r^{\N(r)-1}\right)^{k-n}\\
&\leq m_n^{\N(r)-k}r^{\N(r)n+k-n-kn}\left(m_nr^{n-1}\right)^{k-n}\\
&\leq m_n^{\N(r)-n}r^{\N(r)n-n^2}\\
&=\left(m_nr^n\right)^{\N(r)-n}.
\end{align*}
\end{proof}

\section{Approximate solutions for vector fields}\label{sec:approx}
\label{sec2}

We shall denote the coordinates on $\R^N \times \R$ and on $\C^M$ by $(x,t)=(x_1, \dots, x_N,t)$ and $\zeta = (\zeta_1, \dots, \zeta_M)$, respectively.
For this section, we fix $\Omega^\prime$, an open neighborhood of the origin in $\R^{N}$, and $\Omega^{\prime \prime}$, an open set in $\C^{M}$.
Let
\begin{equation}\label{eq:L}
L = \dfrac{\partial}{\partial t} + \sum_{i = 1}^N a_i(x,t, \zeta)\dfrac{\partial}{\partial x_i} + \sum_{j = 1}^M b_j(x,t, \zeta)\dfrac{\partial}{\partial \zeta_j},
\end{equation}
be a vector field in $\Omega^\prime \times \R \times \Omega^{\prime \prime}$ where $a_i, b_j$ are holomorphic in the variable $\zeta$ and of class $\mathcal{C}^1$ in $(x,t)$.
\begin{defn}
Let $u_0 \in \mathcal{C}^1(\Omega^\prime \times \Omega^{\prime\prime})$ be given.
An $(\mathcal{M},t)$-approximate solution of $L$ on $\Omega^\prime \times \R \times \Omega^{\prime \prime}$ with initial datum $u_0$ is a function $u \in \mathcal{C}^1(\Omega^\prime \times \R \times \Omega^{\prime \prime})$ with the following properties: 
\begin{enumerate}
\item For $(x,\zeta) \in \Omega^\prime \times \Omega^{\prime \prime}$ we have $u(x, 0, \zeta) = u_0(x,\zeta)$;
\item For every compact set $K\Subset\Omega^\prime\times\Omega^{\prime\prime}$ there are constants $A,\gamma,\delta > 0$ such that:
\[
\sup_{(x, \zeta) \in K}|Lu(x,t,\zeta)|\leq Ah(\gamma|t|),\quad 0<|t|\leq \delta.
\]
\end{enumerate}
\end{defn}
Condition $(2)$ in the definition above is equivalent to: for every compact set $K\Subset\Omega^\prime\times\Omega^{\prime\prime}$ there are positive constants $A,\gamma,\delta$ such that
\begin{equation*}
\sup_{(x,\zeta) \in K}\left|Lu(x,t,\zeta)\right| \leq A^{k+1}\frac{M_k}{k!}(\gamma |t|)^k,\quad 0 < |t| \leq \delta.
\end{equation*}
In this section, we shall prove that there exists an $(\M,t)$-approximate solution $u$ of $L$ for every $u_0\in\Cm(\Omega^{\prime}\times\Omega^{\prime\prime})$ as initial datum when the coefficients of $L$ are functions of class $\Cm$ in $(x,t)$. 
Let $\A$ be the subspace of $\Ci(\Omega^\prime \times \Omega^{\prime \prime})$ consisting of all functions that are holomorphic with respect to $\zeta$ and of class $\Cm$ in the variable $x$.
First we shall assume that $a_i, b_j \in \A$ for the vector field \eqref{eq:L} (thus the coefficients of $L$ do not depend on $t$, the general case follows from this particular one) and denote by $\A[[t]]$ the space of formal power series in the variable $t$ with coefficients on $\A$. 
Then the vector field \eqref{eq:L} is an endomorphism of $\A[[t]]$.
Let $f \in \A$ be given and let $u^\sharp(x,\zeta,t) = \sum_{k = 0}^\infty u_k(x,\zeta)t^k$ be a formal solution of the problem:
\[
\begin{cases}
L u^\sharp(x,\zeta,t) = 0,\\
u^\sharp(x,\zeta,0) = f(x,\zeta).
\end{cases}
\]
In fact, we have:
\begin{align*}
u_0(x, \zeta) &= f(x, \zeta),\\
u_k(x, \zeta) &= -\dfrac{1}{k}\left\{\sum_{i=1}^Na_i(x,\zeta)\dfrac{\partial u_{k-1}}{\partial x_i}(x,\zeta)+\sum_{j=1}^Mb_j(x,\zeta)\dfrac{\partial u_{k-1}}{\partial \zeta_i}(x,\zeta)\right\},
\end{align*}
for each $k \in \Z_+ \setminus \{0\}$ and each $(x, \zeta) \in \Omega^\prime \times \Omega^{\prime \prime}$.
\begin{prop} \label{prop:derivuk}
For each compact set $K \subset \Omega^\prime \times \Omega^{\prime \prime}$ there exists $C = C_K > 0$ such that:
\begin{equation*}
\sup_{(x,\zeta)\in K}\left|\partial^\alpha_x\partial^\beta_\zeta u_k(x,\zeta)\right|\leq \dfrac{C^{1+|\alpha|+|\beta|+k}M_{|\alpha|+k}\beta!}{k!},
\end{equation*}
for all $\alpha\in\Z_+^N$ and $\beta\in\Z_+^M$.
\end{prop}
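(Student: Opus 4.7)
The plan is to proceed by induction on $k \in \Z_+$ on a fixed compact $K$. For the base case $k = 0$ we have $u_0 = f \in \A$, so the bound reduces to the defining $\Cm$-estimate of $f$ in the variable $x$ combined with Cauchy's estimates in the holomorphic variable $\zeta$, after choosing $C$ larger than both constants involved.

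For the inductive step I would apply $\partial_x^\alpha \partial_\zeta^\beta$ to the recursion defining $u_k$ and expand via the Leibniz rule in both $x$ and $\zeta$. Substituting the $\A$-bounds for the derivatives of the coefficients $a_i,b_j$ together with the induction hypothesis for the derivatives of $u_{k-1}$ produces a double-binomial sum of products of factorial factors. Three algebraic ingredients reduce it. First, the identity $\binom{\beta}{\delta}\delta!(\beta-\delta)! = \beta!$ collects the $\zeta$-factorials. Second, condition b) and $m_0=1$ yield the supermultiplicative inequality $m_p m_q \leq m_{p+q}$, equivalent to $\binom{p+q}{p} M_p M_q \leq M_{p+q}$. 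Third, the standard multi-index estimate $\binom{\alpha}{\gamma} \leq \binom{|\alpha|}{|\gamma|} \leq \binom{|\alpha|+k}{|\gamma|}$ combined with the second ingredient gives $\binom{\alpha}{\gamma} M_{|\gamma|} M_{|\alpha|-|\gamma|+k} \leq M_{|\alpha|+k}$, producing the target factorial. The residual factors assemble into geometric series in $|\gamma|$ and $|\delta|$ which converge to an absolute constant once $C$ is chosen larger than the $\A$-constant $A$ of the coefficients, and the factor $1/k$ from the recursion converts the $(k-1)!$ of the induction hypothesis into the desired $k!$.

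The hardest step will be closing the induction uniformly in $(\alpha,\beta,k)$. The contribution from $b_j\partial_{\zeta_j}u_{k-1}$ is subtle: the Leibniz expansion produces $(\beta-\delta+e_j)! = (\beta_j - \delta_j + 1)(\beta-\delta)!$, so after summing over $j=1,\dots,M$ one picks up a spurious polynomial factor of size $|\beta|+M$ multiplying $\beta!$. I would absorb this by exploiting the monotonicity of $(m_k)$, which implies $M_{|\alpha|+k-1}\leq M_{|\alpha|+k}/(|\alpha|+k)$: this supplies an extra $1/k$ for the $b_j$ contribution, since its induction hypothesis brings in $M_{|\alpha|+k-1}$ rather than $M_{|\alpha|+k}$, and any remaining polynomial growth in $|\beta|$ is swallowed by the exponential factor $C^{|\beta|}$ after taking $C$ large. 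The final constant $C=C_K$ is then fixed by collecting the bounds on the $\A$-norms of $a_i,b_j,f$ over $K$ together with the requirement $C>A$ needed for the geometric series to converge.
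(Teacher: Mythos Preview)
The paper itself gives no proof here: it simply refers to Lemma~4.1 of \cite{petronilho:09} and Lemma~18 of \cite{hoepfner:10}, noting that those arguments rely only on log-convexity and therefore transfer to the present setting. Your skeleton---induction on $k$, Leibniz in both groups of variables, and the supermultiplicativity $\binom{p+q}{p}M_pM_q\le M_{p+q}$ drawn from $m_pm_q\le m_{p+q}$---matches what those references use.

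There is, however, a genuine gap in the way you close the induction. For the $b_j\partial_{\zeta_j}u_{k-1}$ term, the $\gamma=\delta=0$ summand alone contributes, after your inequality $M_{|\alpha|+k-1}\le M_{|\alpha|+k}/(|\alpha|+k)$, an extra factor $(|\beta|+M)/(|\alpha|+k)$ in front of the target bound, and this ratio is unbounded in $(\alpha,\beta,k)$. Your proposed remedy, that the residual polynomial in $|\beta|$ is ``swallowed by the exponential factor $C^{|\beta|}$ after taking $C$ large'', does not work: the power of $C$ coming from the induction hypothesis applied to $u_{k-1}$ is exactly $C^{1+|\alpha|+|\beta|+k-|\gamma|-|\delta|}$, which for $\gamma=\delta=0$ already coincides with the target exponent, so enlarging $C$ buys no slack there. (A milder version of the same difficulty already affects the $a_i$ part: the constant left after summing your geometric series is bounded below by $NA$ regardless of $C$, so the single-constant induction does not close as written.) The clean fix is twofold. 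First, decouple the constants: prove the bound in the form $C_1^{\,1+|\alpha|+|\beta|}C_2^{\,k}M_{|\alpha|+k}\beta!/k!$ with $C_2$ chosen large relative to $C_1$ and the coefficient bounds, so that the factor $C_2$ gained at each step absorbs the accumulated multiplicative constant, and set $C=\max(C_1,C_2)$ at the end. Second, for the $\zeta$-loss, use the holomorphy in $\zeta$ directly: Cauchy's inequalities on a slightly larger $\zeta$-compact reduce the estimate to the case $\beta=0$ and also control the single $\partial_{\zeta_j}$ in the recursion, eliminating the spurious $(|\beta|+M)$ factor without any Leibniz expansion in $\zeta$.
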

For a proof of the above proposition, see Lemma $4.1$ in \cite{petronilho:09} and Lemma $18$ in \cite{hoepfner:10}, where the Gevrey case and the strongly non-quasi-analytic case, respectively, are proved; the proofs also hold in our case for they are based only on the log-convexity property.
We save the the symbol $C$ for the constant in Proposition \ref{prop:derivuk}.
\begin{defn}
For $n\in\Z_+$ define $\T^n:\A[[t]]\longrightarrow\A[[t]]$ by
\[
\T^n\left[\sum_{k=0}^\infty s_k(x,\zeta)t^k\right]=\sum_{k=0}^ns_k(x,\zeta)t^k,
\]
where $\sum_{k=0}^\infty s_k(x,\zeta)t^k\in\A[[t]]$.
\end{defn}
\begin{prop}\label{prop:estLTn}
For each compact set $K \subset \Omega^\prime \times \Omega^{\prime \prime}$ there exists $B = B_K > 0$ such that:
\[
\sup_{(x, \zeta) \in K}\left|L(\T^nu^\sharp)(x, \zeta, t)\right| \leq B^{n+1}m_n|t|^n.
\]
\end{prop}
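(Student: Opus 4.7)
The plan is to exploit the recursion defining the coefficients $u_k$ to show that $L(\T^n u^\sharp)$ collapses, via a telescoping cancellation, to a single term of the form $-(n+1)u_{n+1}(x,\zeta)\,t^n$. Once this identity is established, the bound on $u_{n+1}$ coming from Proposition \ref{prop:derivuk} does the rest.

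First I would compute $L(\T^n u^\sharp)$ directly, using that the coefficients $a_i, b_j$ are independent of $t$ (the reduction to this case has already been made in the text). This gives
\[
L(\T^n u^\sharp) = \sum_{k=1}^n k\,u_k\,t^{k-1} + \sum_{k=0}^n \left( \sum_{i=1}^N a_i\,\partial_{x_i} u_k + \sum_{j=1}^M b_j\,\partial_{\zeta_j} u_k \right) t^k .
\]
The defining recursion for the $u_k$'s is precisely
\[
\sum_{i=1}^N a_i\,\partial_{x_i} u_k + \sum_{j=1}^M b_j\,\partial_{\zeta_j} u_k = -(k+1)\,u_{k+1},
\]
so after substituting and reindexing the first sum with $k \mapsto k+1$, the terms with $k=0,\dots,n-1$ cancel and only the boundary term $k=n$ of the second sum survives, namely $-(n+1)\,u_{n+1}(x,\zeta)\,t^n$.

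Second, I would apply Proposition \ref{prop:derivuk} with $\alpha=\beta=0$ and index $n+1$: on the compact set $K$ we have $|u_{n+1}(x,\zeta)| \leq C^{n+2} M_{n+1}/(n+1)! = C^{n+2} m_{n+1}$. Using condition c) of regularity, $m_{n+1} \leq c^n m_n$, we obtain
\[
\sup_{(x,\zeta)\in K} |L(\T^n u^\sharp)(x,\zeta,t)| \leq (n+1)\,C^{n+2}\,c^n\,m_n\,|t|^n.
\]
Finally, since $(n+1) \leq 2^n$ for every $n \in \Z_+$, we have $(n+1)\,C^{n+2}\,c^n \leq C^2\,(2Cc)^n \leq B^{n+1}$ provided $B$ is chosen large enough (e.g.\ $B = \max(C^2, 2Cc)$), which yields the desired estimate. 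I do not foresee any real obstacle: the whole content of the proposition lies in the telescoping identity, and the rest is constant bookkeeping.
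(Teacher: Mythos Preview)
Your proposal is correct and follows essentially the same route as the paper. The paper obtains the identity $L(\T^n u^\sharp)=\pm(n+1)u_{n+1}\,t^n$ by writing $L(\T^n u^\sharp)=-L\big(u^\sharp-\T^n u^\sharp\big)$ and observing that, since $Lu^\sharp=0$ formally and the left-hand side is a polynomial of degree at most $n$, only the $t^n$ term survives; this is exactly your telescoping computation rephrased in formal-power-series language, and the ensuing estimate via Proposition~\ref{prop:derivuk} and property~(c) is handled the same way.
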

\begin{proof}
We have the following identity of formal power series:
\begin{align*}
L\left(\T^nu^\sharp\right)(x,\zeta,t) &= L\left( \sum_{k=0}^n u_k(x,\zeta)t^k\right)\\
&=L\left(u^\sharp(x,\zeta,t) - \sum_{k = n+1}^\infty u_k(x, \zeta)t^k\right) \\
& = - L\left(\sum_{k = n+1}^\infty u_k(x, \zeta)t^k\right)\\
&=\left[(n+1)u_{n+1}(x, \zeta)\right]t^n+Q(x,\zeta,t),
\end{align*}
where $Q(x,\zeta,t)\in t^{n+1}\A[[t]]$.
But since the left-hand side of the previous equation is a polynomial in the variable $t$ of degree $n$, we have that $L\left(\T^nu^\sharp\right)(x,\zeta,t)=\left[(n+1)u_{n+1}(x, \zeta)\right]t^n$.
Now the result follows from Proposition \ref{prop:derivuk} combined with property (c) of the regular Denjoy-Carleman classes definition.
\end{proof}
Now we can use the technique presented in \cite{dyn'kin} to define an $(\M, t)$-approximate solution $u$ for the vector field \eqref{eq:L} with initial datum $f \in \A$. 
Let $\epsilon > 0$ be given and let $\psi \in \Ci_c(D_\epsilon(0))$ be a cutoff function such that $\psi \geq 0$, $\psi(z) = \psi(|z|)$ for all $z$, and
\[\int_\C \psi(z) \d z \wedge \d \bar{z} = 2/i.\]
Fix $U\Subset \Omega^\prime$ a neighborhood of the origin and $V\Subset\Omega^{\prime\prime}$ an open set. Now define for $x\in U$, $\zeta\in V$ and $|t|>0$
\[
u(x, t, \zeta) = \dfrac{i}{2t^2} \int_\C \psi\left(\dfrac{z-t}{|t|}\right)\sum_{k=0}^{\N((1+\epsilon)C|z|)}u_k(x, \zeta)z^k \d z \wedge \d \bar{z}.
\]
The function under the integral sign is measurable since $\N(r)$ is a step function, so $u$ is well defined. Differentiating under the integral sign we conclude that $u$ is holomorphic in $\zeta$.
Because of the choice of $\psi$ we have that $\lim_{t\to 0}u(x,t,\zeta)=u_0(x,\zeta)=f(x,\zeta)$. So we can set $u(x,0,\zeta)=f(x,\zeta)$.
In view of the symmetry of $\psi$, we have:
\[
\dfrac{i}{2t^2}\int_\C \psi\left( \dfrac{z - t}{|t|} \right) P(z) \d z \wedge \d \bar{z} = P(t),
\]
for every polynomial $P(z)$, in fact:
\begin{align*}
\dfrac{i}{2t^2}\int_\C \psi\left( \dfrac{z - t}{|t|} \right) P(z) \d z \wedge \d \bar{z} 
& =  \dfrac{i}{2}\int_\C\psi(w)P(|t|w+t) \d w \wedge \d \bar{w} \\
& = P(t) + \dfrac{i}{2}\int_\C\psi(w)Q(t, |t|w) \d w \wedge \d \bar{w},
\end{align*}
where $Q(t, z)$ is a polynomial such that $Q(t,0) = 0$, hence
\[\int_\C \psi(w)Q(t,|t|w) \d w \wedge \d \bar{w} = 0.\]
Therefore we have
\begin{align*}
Lu(x,t, \zeta) &= L\left[\sum_{k=0}^nu_k(x, \zeta)t^k + \dfrac{i}{2t^2}\int_\C \psi\left(\dfrac{z-t}{|t|}\right)\sum_{k=n+1}^{\N((1+\epsilon)C|z|)}u_k(x, \zeta)z^k \d z \wedge \d \bar{z}\right] \\
&=L\left(\T^nu^\sharp\right)(x, \zeta, t)+\dfrac{i}{2}\int_\C L\left[\dfrac{1}{t^2}\psi\left(\dfrac{z-t}{|t|}\right)\sum_{k=n+1}^{\N((1+\epsilon)C|z|)}u_k(x, \zeta)z^k \right]\d z \wedge \d \bar{z}\\
&=L\left(\T^nu^\sharp\right)(x, \zeta, t) + \dfrac{i}{2}\int_\C L\left[\dfrac{1}{t^2}\psi\left(\dfrac{z-t}{|t|}\right)\right]\sum_{k=n+1}^{\N((1+\epsilon)C|z|)}u_k(x, \zeta)z^k \d z \wedge \d \bar{z}\\
&+\dfrac{i}{2}\int_\C \dfrac{1}{t^2}\psi\left(\dfrac{z-t}{|t|}\right)\sum_{k=n+1}^{\N((1+\epsilon)C|z|)}L\left[u_k(x, \zeta)\right]z^k \d z \wedge \d \bar{z}
\end{align*}
By simple computations one can show that
\[
\left|L\left[\dfrac{1}{t^2}\psi\left(\dfrac{z-t}{|t|}\right)\right]\right|\leq \dfrac{C_1}{|t|^4},
\]
for some positive constant $C_1$. 
Since $0 \leq \psi \leq 1/(\pi\epsilon^2)$, we have:
\begin{align*}
|Lu(x,t,\zeta)| &\leq \left|L\left(\T^nu^\sharp\right)(x, \zeta, t)\right|+\dfrac{1}{2}\int_\C \left|L\left[\dfrac{1}{t^2}\psi\left(\dfrac{z-t}{|t|}\right)\right]\right|\sum_{k=n+1}^{\N((1+\epsilon)C|z|)}|u_k(x, \zeta)||z|^k |\d z \wedge \d \bar{z}|\\
&+\dfrac{1}{2}\int_\C \left|\dfrac{1}{t^2}\psi\left(\dfrac{z-t}{|t|}\right)\right|\sum_{k=n+1}^{\N((1+\epsilon)C|z|)}\left|Lu_k(x, \zeta)\right||z|^k |\d z \wedge \d \bar{z}|\\
&\leq \left|L\left(\T^nu^\sharp\right)(x, \zeta, t)\right|+\dfrac{C_1}{2|t|^4}\int_{|z-t|\leq |t|\epsilon}\sum_{k=n+1}^{\N((1+\epsilon)C|z|)}|u_k(x, \zeta)||z|^k |\d z \wedge \d \bar{z}|\\
&+\dfrac{1}{2\pi\epsilon^2}\int_{|z-t|\leq |t|\epsilon}\sum_{k=n+1}^{\N((1+\epsilon)C|z|)}\left|Lu_k(x, \zeta)\right||z|^k |\d z \wedge \d \bar{z}|.
\end{align*}
Now we fix $n=\N\left((1+\epsilon)^2C|t|\right)-1$. Note that $n$ must be positive, so from now on we shall assume $|t|\leq 1/(1+\epsilon)^2C=\delta$. 
Applying Lemma \ref{lem:mkrk} we can estimate: 
\[
\dfrac{M_k}{k!}\left((1+\epsilon)C|z|\right)^k \leq \dfrac{M_{n+1}}{(n+1)!} \left((1+\epsilon)C|z|\right)^{n+1},
\]
for $n < k \leq \N((1+\epsilon)C|z|)$.
Therefore, by Proposition \ref{prop:derivuk} and using $|z - t| \leq \epsilon|t|$, we have:
\begin{align*}
\sum_{k=n+1}^{\N((1+\epsilon)C|z|)}|u_k(x, \zeta)||z|^k & \leq \sum_{k=n+1}^{\N((1+\epsilon)C|z|)}C\dfrac{M_k}{k!}\left((1+\epsilon)C|z|\right)^k \dfrac{1}{(1+\epsilon)^k} \\
& \leq C_2 M_{n+1} \dfrac{C^{n+1}(1+\epsilon)^{2(n+1)}|t|^{n+1}}{(n+1)!} \\
& = C_2 (1+\epsilon)^2|t| h_1\!\left((1+\epsilon)^2C|t|\right),
\end{align*}
where this last equality follows from our choice of $n$.
Analogously, we have:
\begin{align*}
\sum_{k=n+1}^{\N((1+\epsilon)C|z|)}|Lu_k(x, \zeta)||z|^k & \leq C_3\sum_{k=n+1}^{\N((1+\epsilon)C|z|)}C^2\dfrac{M_{k+1}}{(k+1)!}\left((1+\epsilon)C|z|\right)^k \dfrac{1}{(1+\epsilon)^k}\\
& + C_4\sum_{k=n+1}^{\N((1+\epsilon)C|z|)}C\dfrac{M_k}{k!}\left((1+\epsilon)C|z|\right)^k \dfrac{1}{(1+\epsilon)^k}\\
& \leq C_5 M_{n+1} \dfrac{(1+\epsilon)^{2(n+1)}C^n|t|^{n+1}}{(n+1)!} \\
& = C_5 (1+\epsilon)^2|t| h_1\!\left((1+\epsilon)^2C|t|\right).
\end{align*}
By Proposition \ref{prop:estLTn}, we can also estimate the remaining term: 
\begin{align*}
\left|L\left(\T^nu^\sharp\right)(x, \zeta, t)\right| &\leq B^{n+1}m_n|t|^n\\
&\leq Bm_{n+1}(B|t|)^n \\ 
&= C_7h_1\!\left((1+\epsilon)^2C|t|\right).
\end{align*}
Summing up these three estimates and applying Proposition \ref{prop:h1h} we obtain
\begin{equation*}
|Lu(x,t,\zeta)|\leq C_8h(Q|t|),\qquad (x,\zeta)\in U\times V,\quad 0<|t|\leq \delta.
\end{equation*}
We claim that $u$ is a $\Ci$-function. 
We just have to check if $u$ is of class $\Ci$ at $\{(x,0,\zeta)\}$. 
For $n>0$ we have:
\begin{multline*}
\frac{1}{|t|^n}\left|\partial_x^\alpha \partial_\zeta^\beta u(x,t,\zeta) - \sum_{k=0}^{n}\partial_x^\alpha \partial_\zeta^\beta u_k(x,\zeta)t^k\right| \leq \\
\leq \dfrac{1}{2|t|^n}\int_\C \left|\dfrac{1}{t^2}\psi\left(\dfrac{z-t}{|t|}\right)\right|\sum_{k=n+1}^{\N((1+\epsilon)C|z|)}\left|\partial_x^\alpha \partial_\zeta^\beta u_k(x, \zeta)\right||z|^k |\d z \wedge \d \bar{z}|\\
=\dfrac{1}{2|t|^n}\int_\C|\psi(w)|\sum_{k=n+1}^{\N((1+\epsilon)C||t|w+t|)}\left|\partial_x^\alpha \partial_\zeta^\beta u_k(x, \zeta)\right|\left||t|w+t\right|^k|\d w \wedge \d \bar{w}|
\longrightarrow 0,
\end{multline*}
\noindent when $t\to 0$.
We proved the following theorem:
\begin{thm}\label{thm:sol_approx}
Let $\Omega = \Omega^\prime \times \R \times \Omega^{\prime \prime} \subset \R^N \times \R \times \C^M$ be an open set, where $\Omega^\prime \subset \R^{N}$ is an open neighborhood of the origin.
Let: 
\[
L = \dfrac{\partial}{\partial t} + \sum_{i = 1}^N a_i(x, \zeta)\dfrac{\partial}{\partial x_i} + \sum_{j = 1}^M b_j(x, \zeta)\dfrac{\partial}{\partial \zeta_j},
\]
be a vector field defined on $\Omega$, where $a_i, b_j \in\Ci(\Omega^\prime\times\Omega^{\prime\prime})$ are functions of class $\Cm$ with respect to $x$ and holomorphic in the variable $\zeta$. 
Let $f \in\Ci(\Omega^\prime\times\Omega^{\prime\prime})$ be a function of class $\Cm$ with respect to $x$ and holomorphic in the variable $\zeta$. 
Then for every open neighborhood of the origin $U \Subset \Omega^\prime$ and every open set $V \Subset \Omega^{\prime\prime}$,
there are a $\Ci$-function $u = u(x,t,\zeta)$ defined on $U \times\R \times V$ and holomorphic in $\zeta$ and constants $A, Q, \delta > 0$ such that:
\[
\begin{cases}
\left|Lu(x,t,\zeta)\right| \leq Ah(Q|t|), &(x, t, \zeta) \in U \times (-\delta,\delta) \times V,\\
u(x,0,\zeta) = f(x,\zeta), &(x,\zeta) \in U \times V.
\end{cases}
\]
\textit{i.e.}, the function $u$ is an $(\mathcal{M},t)$-approximate solution of $L$ on $U\times \R\times V$ with initial datum $f$.
\end{thm}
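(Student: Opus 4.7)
The plan is to adapt Dyn'kin's summation technique from \cite{dyn'kin} to the vector field $L$. First I would build a formal power series solution $u^\sharp(x,t,\zeta) = \sum_{k\geq 0} u_k(x,\zeta)\, t^k \in \A[[t]]$ of the Cauchy problem $Lu^\sharp=0$, $u^\sharp|_{t=0}=f$, using the recursion $u_0=f$ and $k u_k = -\sum a_i\partial_{x_i}u_{k-1} - \sum b_j\partial_{\zeta_j}u_{k-1}$. The coefficients $u_k$ are controlled by Proposition \ref{prop:derivuk}, whose proof only uses log-convexity. Since our class may be quasianalytic, this series cannot be Borel-summed via flat cutoffs as in \cite{petronilho:09,hoepfner:10}; we must instead truncate at an index depending on $|t|$ and regularize the resulting discontinuities in the complex plane.

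Concretely, I would fix $\epsilon>0$ and a radial cutoff $\psi\in\Ci_c(D_\epsilon(0))$ normalized so that $(i/2)\int_\C \psi(z)\,\d z\wedge\d \bar z = 1$, then set
\[
u(x,t,\zeta) = \dfrac{i}{2t^2}\int_\C \psi\!\left(\dfrac{z-t}{|t|}\right)\sum_{k=0}^{\N((1+\epsilon)C|z|)} u_k(x,\zeta)\, z^k \, \d z \wedge \d \bar z
\]
for $t \neq 0$, extended by $u(x,0,\zeta)=f(x,\zeta)$. Measurability is automatic since $\N$ is a step function, and holomorphy in $\zeta$ follows by differentiation under the integral. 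The radial symmetry of $\psi$ forces $u$ to reproduce polynomials, giving both the initial datum and the decomposition $u = \T^n u^\sharp + R_n$, where $R_n$ is the same integral but with the sum restricted to $n<k\le\N((1+\epsilon)C|z|)$.

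To estimate $Lu$ I would pick $n = \N((1+\epsilon)^2 C|t|)-1$ (which forces $|t|\le\delta = 1/((1+\epsilon)^2 C)$) and apply $L$ to the decomposition above, producing three contributions: $L(\T^n u^\sharp)$, bounded by Proposition \ref{prop:estLTn}; a term where $L$ differentiates the factor $t^{-2}\psi((z-t)/|t|)$, which is pointwise $O(|t|^{-4})$ from the explicit form of $\psi$; and a term where $L$ acts on each $u_k$, controlled by Proposition \ref{prop:derivuk}. The decisive ingredient is Lemma \ref{lem:mkrk}: for $n < k \le \N((1+\epsilon)C|z|)$ one has $(M_k/k!)((1+\epsilon)C|z|)^k \le (M_{n+1}/(n+1)!)((1+\epsilon)C|z|)^{n+1}$, which collapses the tail into a single term of size $h_1((1+\epsilon)^2 C|t|)$ after using $|z-t| \le \epsilon|t|$ on $\supp \psi((z-t)/|t|)$. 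Summing the three contributions and converting $h_1$ to $h$ via Proposition \ref{prop:h1h} yields $|Lu(x,t,\zeta)| \le A h(Q|t|)$ on $U\times(-\delta,\delta)\times V$. Smoothness at $t=0$ is then verified by showing $|t|^{-n}|\partial^\alpha_x\partial^\beta_\zeta(u-\T^n u^\sharp)|\to 0$ as $t\to 0$ for every $n,\alpha,\beta$, which reduces, via the change of variables $w=(z-t)/|t|$, to dominated convergence against the Proposition \ref{prop:derivuk} bounds.

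The main obstacle is the balancing act in the third step: the truncation index $n$ must be tuned simultaneously so that $L(\T^n u^\sharp)$, the $\psi$-derivative contribution, and the tail all live on the same $h_1$-scale. This is only possible because strong log-convexity (condition (b)) allows Lemma \ref{lem:mkrk} to replace the entire tail sum by a single extremal term; this is precisely the point at which the method succeeds in the quasianalytic regime, where flat-function techniques are unavailable.
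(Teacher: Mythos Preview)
Your proposal is correct and follows the paper's proof essentially verbatim: the same Dyn'kin-type integral with truncation at $\N((1+\epsilon)C|z|)$, the same choice $n=\N((1+\epsilon)^2C|t|)-1$, the same three-term decomposition of $Lu$, and the same appeals to Propositions \ref{prop:derivuk}, \ref{prop:estLTn}, \ref{prop:h1h} and Lemma \ref{lem:mkrk}. The only difference is expository compression; nothing substantive is missing.
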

In Theorem \ref{thm:sol_approx} we assumed that the coefficients of $L$ do not depend on $t$, however one can obtain the general case from it:
\begin{thm} \label{thm:approx_sol}
Let $\Omega = \Omega^\prime \times I \times \Omega^{\prime \prime} \subset \R^N \times \R \times \C^M$, where $\Omega^\prime \subset \R^N$ is an open neighborhood of the origin and $\Omega^{\prime\prime}\subset\C^M$ is an open set.
Let 
\[
L = \dfrac{\partial}{\partial t} + \sum_{i = 1}^N a_i(x,t, \zeta)\dfrac{\partial}{\partial x_i} + \sum_{j = 1}^M b_j(x,t, \zeta)\dfrac{\partial}{\partial \zeta_j},
\]
be a vector field defined on $\Omega$, where $a_i$, $b_j$ are functions of class $\Cm$ with respect to the variables $(x,t)$ and holomorphic in the variable $\zeta$. Let $f\in\Ci(\Omega^\prime\times\Omega^{\prime\prime})$ holomorphic in $\zeta$ and $\Cm$ in $x$. 
Then for every open neighborhood of the origin $U \Subset \Omega^\prime$ and every open neighborhood of the origin $V \Subset \Omega^{\prime\prime}$, 
there are a $\Ci$-function $u = u(x,t,\zeta)$ defined on $U \times\R \times V$ and holomorphic in $\zeta$ and constants $A, Q, \delta > 0$ such that:
\[
\begin{cases}
\left|Lu(x,t,\zeta)\right| \leq Ah(Q|t|), &(x, t, \zeta) \in U \times (-\delta,\delta) \times V,\\
u(x,0,\zeta) = f(x,\zeta), &(x,\zeta) \in U \times V.
\end{cases}
\]
\textit{i.e.}, the function $u$ is an $(\mathcal{M},t)$-approximate solution of $L$ on $U\times \R\times V$ with initial datum $f$.
\end{thm}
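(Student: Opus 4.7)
The idea is to reduce Theorem~\ref{thm:approx_sol} to the $t$-independent case already established in Theorem~\ref{thm:sol_approx} by promoting $t$ to an extra base-space variable. Introduce an auxiliary real variable $s$ and, writing $\tilde{x} = (x, s) \in \R^{N+1}$, consider on $(\Omega^\prime \times I) \times \R \times \Omega^{\prime\prime}$ the vector field
\begin{equation*}
\tilde{L} = \dfrac{\partial}{\partial t} + \dfrac{\partial}{\partial s} + \sum_{i=1}^{N} a_i(x, s, \zeta) \dfrac{\partial}{\partial x_i} + \sum_{j=1}^{M} b_j(x, s, \zeta) \dfrac{\partial}{\partial \zeta_j}.
\end{equation*}
Its coefficients are of class $\Cm$ with respect to $\tilde{x}$ (the $\partial_s$-coefficient being the constant~$1$), holomorphic in $\zeta$, and, crucially, \emph{independent of~$t$}. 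Define $\tilde{f}(\tilde{x}, \zeta) := f(x, \zeta)$, which inherits the required regularity in $\tilde{x}$ and $\zeta$.

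Given the neighborhoods $U \Subset \Omega^\prime$ and $V \Subset \Omega^{\prime\prime}$ from the statement, pick an open interval $J \subset I$ containing $0$ with $U \times J \Subset \Omega^\prime \times I$. Applying Theorem~\ref{thm:sol_approx} to $\tilde{L}$ with initial datum $\tilde{f}$ and base neighborhood $U \times J \subset \R^{N+1}$ yields a $\Ci$-function $\tilde{u}(x, s, t, \zeta)$ on $(U \times J) \times \R \times V$, holomorphic in $\zeta$, with $\tilde{u}(x, s, 0, \zeta) = f(x, \zeta)$ and
\begin{equation*}
|\tilde{L}\tilde{u}(x, s, t, \zeta)| \leq A\, h(Q|t|), \qquad (x, s, \zeta) \in U \times J \times V, \quad 0 < |t| < \delta,
\end{equation*}
for suitable $A, Q, \delta > 0$.

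Now set $u(x, t, \zeta) := \tilde{u}(x, t, t, \zeta)$ on $U \times J \times V$. This function is $\Ci$, holomorphic in $\zeta$, and satisfies $u(x, 0, \zeta) = f(x, \zeta)$. The chain rule gives $\partial_t u(x, t, \zeta) = (\partial_t \tilde{u} + \partial_s \tilde{u})(x, t, t, \zeta)$, while the remaining derivatives of $u$ are simply those of $\tilde{u}$ evaluated at $(x, t, t, \zeta)$. Substituting into $Lu$ and using that $a_i(x, t, \zeta)$ equals $a_i(x, s, \zeta)$ at $s = t$ (and likewise for $b_j$), one obtains the identity
\begin{equation*}
Lu(x, t, \zeta) = (\tilde{L}\tilde{u})(x, t, t, \zeta),
\end{equation*}
from which the desired estimate $|Lu(x, t, \zeta)| \leq A\, h(Q|t|)$ on $U \times \bigl((-\delta, \delta) \cap J\bigr) \times V$ follows at once.

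There is no genuine obstacle in this argument: all the analytic work sits inside Theorem~\ref{thm:sol_approx}, and the only point to check is the algebraic identity $Lu = (\tilde{L}\tilde{u})|_{s=t}$, which is engineered precisely by the choice of the $\partial_s$-coefficient equal to $1$ in $\tilde{L}$. The remaining bookkeeping amounts to shrinking $J$ so that the substitution $s = t$ stays inside the domain of $\tilde{u}$.
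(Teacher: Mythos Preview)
Your proof is correct and follows essentially the same approach as the paper: promote $t$ to an extra space variable, add a fresh time parameter so that the coefficients become time-independent, apply Theorem~\ref{thm:sol_approx}, and then restrict to the diagonal $s=t$. The only difference is notational---the paper writes $\widetilde{L}=\partial_s+L$ with $s$ as the new time and $(x,t)$ as the new space, whereas you relabel so that $t$ remains the time variable---but the argument and the key identity $Lu=(\tilde{L}\tilde{u})|_{s=t}$ are identical.
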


\begin{proof}
Consider the vector field $\widetilde{L}$ in $\Omega\times \R$ defined by 

\begin{equation*}
\widetilde{L}=\partial_s + L,
\end{equation*}

\noindent and consider the function $\widetilde{f}(x,t,\zeta)=f(x,\zeta)$. Let $U \Subset \Omega^\prime$, $V\Subset\Omega^{\prime\prime}$ both neighborhoods of the origin and $r>0$ such that $(-r,r)\Subset I$. By Theorem \ref{thm:sol_approx} there exists a function $\widetilde{u}\in\Ci(U\times (-r,r)\times \R\times V)$ and constants $A,Q,\delta>0$ such that $U\times(-\delta,\delta)\times V\Subset \Omega\times \R$, $\widetilde{u}(x,t,0,\zeta)=\widetilde{f}(x,t,\zeta)$, for every $(x,t,\zeta)\in U\times (-r,r)\times V$, and 
\begin{equation} \label{eq:est_sol_approx_ger_dem}
|\widetilde{L}\widetilde{u}(x,t,s,\zeta)|\leq Ah(Q|s|), \quad (x,t,s,\zeta)\in U\times (-r,r)\times(-\delta,\delta)\times V.
\end{equation}
We shall assume $\delta < r$. 
Set $F(x,t,\zeta)=\widetilde{u}(x,t,t,\zeta)$ for $x\in U$, $\zeta\in V$ and $|t|<\delta$. 
We have
\begin{align*}
Lu(x,t,\zeta)&=L(\widetilde{u}(x,t,t,\zeta))\\
&=\widetilde{L}\widetilde{u}(x,t,t,\zeta).
\end{align*}
Therefore the desired estimate follows from \eqref{eq:est_sol_approx_ger_dem}.
\end{proof}

\section{Nonlinear PDEs}
\label{sec:nonlinear}

The following lemmas are the $\mathcal{C}^\mathcal{M}$-counterparts of results found in \cite{asano:95}.
We shall denote the coordinates in $\R^{N+1} = \R^N \times \R$ by $(x,t) = (x_1, \dots, x_N,t)$.

\begin{lem}\label{lem:L1}
Let $\Omega \subset \R^{N+1}$ be an open neighborhood of the origin.
Let
\[L = \dfrac{\partial}{\partial t} + \sum_{j=1}^Na_j(x,t)\dfrac{\partial}{\partial x_j},\]
be a vector field in $\Omega$ with coefficients in $\mathcal{C}^1(\Omega)$.
For each $1 \leq j \leq N$, suppose that there exists $Z_j \in \mathcal{C}^1(\Omega)$ an $(\mathcal{M}, t)$-approximate solution of $L$ with initial condition $Z_j(x, 0) = x_j$.
Then there exists a vector field
\[
L_1 = \dfrac{\partial}{\partial t} + \sum_{j=1}^Nb_j(x,t)\dfrac{\partial}{\partial x_j},
\]
defined on an open neighborhood of the origin $\Omega_1 \subset \Omega$ and with coefficients in $\mathcal{C}^1(\Omega_1)$ such that: 
\begin{enumerate}
\item For each $1 \leq j \leq N$ we have:
\begin{align*}
L_1(Z_j) &= 0, \\
a_j(x, 0) &= b_j(x, 0);
\end{align*}
\item Every $(\M, t)$-approximate solution of $L$ is an $(\M,t)$-approximate solution of $L_1$.
\end{enumerate}
\end{lem}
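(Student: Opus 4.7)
The plan is to choose the coefficients $b_j$ of $L_1$ so that $L_1 Z_j = 0$ exactly for each $j$, and then to verify that $L_1$ agrees with $L$ at $t=0$ and that the difference $L_1 - L$ does no harm to $(\M,t)$-flatness on any approximate solution of $L$.

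Let $J(x,t) = (\partial Z_j/\partial x_k)_{j,k=1}^N$ be the spatial Jacobian of $x\mapsto Z(x,t)=(Z_1(x,t),\dots,Z_N(x,t))$. Since $Z_j(x,0)=x_j$, we have $J(x,0)=I$, so by continuity $J$ is invertible on some open neighborhood $\Omega_1 \subset \Omega$ of the origin. Written in matrix form, the $N$ conditions $L_1 Z_j = 0$ amount to $J^{\mathrm{T}}\, b = -\partial_t Z$, so I define
\[
b(x,t) \dfn -\bigl(J(x,t)^{\mathrm{T}}\bigr)^{-1} \partial_t Z(x,t), \qquad (x,t) \in \Omega_1.
\]
By construction $L_1 Z_j \equiv 0$. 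The initial agreement $b_j(x,0) = a_j(x,0)$ follows because $LZ_j$ is $(\M,t)$-flat, hence vanishes at $t=0$, which together with $J(x,0) = I$ gives $\partial_t Z_j(x,0) = -a_j(x,0)$. This proves (1).

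For (2), subtract $LZ_j$ from $L_1 Z_j = 0$ to obtain the vector identity $J^{\mathrm{T}}(b-a) = -LZ$. Since $(J^{\mathrm{T}})^{-1}$ is continuous, hence locally bounded, on $\Omega_1$, and each $LZ_j$ satisfies an estimate $|LZ_j(x,t)| \leq A h(\gamma |t|)$ on every compact of $\Omega_1$, the difference $b_j - a_j$ is itself $(\M,t)$-flat on every compact of $\Omega_1$. For an arbitrary $(\M,t)$-approximate solution $u$ of $L$ one then writes
\[
L_1 u = L u + \sum_{j=1}^N (b_j - a_j)\, \partial_{x_j} u.
\]
Because $u \in \mathcal{C}^1$, each $\partial_{x_j} u$ is bounded on compacts, and multiplying a locally bounded function by an $(\M,t)$-flat one preserves the estimate $A' h(\gamma |t|)$; adding the $(\M,t)$-flat quantity $Lu$ yields $(\M,t)$-flatness of $L_1 u$. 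The key conceptual point, and the main obstacle if one is not careful, is precisely to push the flatness from $LZ_j$ into the \emph{coefficients} $b_j - a_j$, rather than into a quantity that would require differentiating $u$ beyond $\mathcal{C}^1$; this is what makes the conclusion available for arbitrary $\mathcal{C}^1$ approximate solutions, whose higher derivatives one cannot a priori control.
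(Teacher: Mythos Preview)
Your argument is correct and is exactly the construction the paper defers to \cite{asano:95} for: solve $L_1 Z_j = 0$ by inverting the spatial Jacobian of $Z$ near $t=0$ (where it equals the identity), then push the $(\M,t)$-flatness of $LZ$ onto the coefficient difference $b-a$ via that same inverse, so that $L_1 u = Lu + (b-a)\cdot\nabla_x u$ is $(\M,t)$-flat for any $\mathcal{C}^1$ approximate solution $u$. One cosmetic slip: with your convention $J_{jk}=\partial Z_j/\partial x_k$ the linear system reads $Jb=-\partial_t Z$, not $J^{\mathrm T}b=-\partial_t Z$; since only invertibility and local boundedness of $J$ are used, nothing else is affected.
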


For a proof of Lemma \ref{lem:L1} see Section 2 of \cite{asano:95}, pp. 3010--3011.

\begin{lem}\label{lem:FBI}
Let $\Omega \subset \R^{N+1}$ be an open neighborhood of the origin.
Let
\[
L = \dfrac{\partial}{\partial t} + \sum_{j=1}^Na_j(x,t)\dfrac{\partial}{\partial x_j},
\]
be a vector field in $\Omega$ where $a_j \in \mathcal{C}^1(\Omega)$, $1 \leq j \leq N$.
Suppose that there exists $Z_j \in \mathcal{C}^1(\Omega)$ an $(\mathcal{M}, t)$-approximate solution of $L$ with initial condition $Z_j(x, 0) = x_j$
Let $\xi_0 \in \R^N \setminus \{0\}$ be such that $\Im\,a(0) \cdot \xi_0 < 0$.
Then there exists an open cone $\Gamma \subset \R^N\setminus \{0\}$, an open neighborhood of the origin $U \Subset \R^N$ , a cutoff function $\chi \in \Ci_c(\R^N)$, with $\chi = 1$ on $U$,  and constants $A > 0$ such that $\xi_0 \in \Gamma$ and
\begin{equation}
\left|\F\!\left[\chi \Psi_0\right](x,\xi)\right| \leq \dfrac{A^{k+1}M_k}{|\xi|^k}, \quad (x,\xi) \in V \times \Gamma, \quad k \in \Z_+,
\end{equation}
where $V \subset U$ is an open neighborhood of the origin, and $\Psi_0(x) = \Psi(x,0)$ is the trace of any $(\mathcal{M},t)$-approximate solution of $L$.
\end{lem}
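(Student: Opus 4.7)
The plan is to reduce to the case $L_{1}Z_{j}=0$ via Lemma \ref{lem:L1} and to deform the FBI contour along the map $y\mapsto Z(y,t)$, converting time decay into decay in $|\xi|$ by a Stokes argument. Replacing $L$ by the vector field $L_{1}=\partial_{t}+\sum_{k}b_{k}\partial_{y_{k}}$ produced by Lemma \ref{lem:L1}, one has $L_{1}Z_{j}=0$, $b_{k}(y,0)=a_{k}(y,0)$, and $\Psi$ remains an $(\M,t)$-approximate solution, so the sign hypothesis $\Im\,a(0)\cdot\xi_{0}<0$ transfers to $L_{1}$. Put
\[
\phi(x,\xi,y,t)=i(x-Z(y,t))\cdot\xi-|\xi|(x-Z(y,t))^{2},
\]
\[
J(x,\xi,t)=\int\chi(y)\,\Psi(y,t)\,e^{\phi}\,\det\!\bigl(\partial Z/\partial y\bigr)(y,t)\,\d y,
\]
so that $J(x,\xi,0)=\F[\chi\Psi_{0}](x,\xi)$. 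The expansion $Z_{j}(y,t)=y_{j}-tb_{j}(y,0)+O(t^{2})$ together with the sign hypothesis furnishes an open cone $\Gamma\ni\xi_{0}$, neighborhoods $V\Subset U$ of the origin, and constants $c_{0},c_{1},\delta>0$ such that
\[
\Re\,\phi(x,\xi,y,t)\leq -c_{0}|t||\xi|-c_{1}|\xi||x-y|^{2},\quad -\delta<t<0,\ (x,\xi,y)\in V\times\Gamma\times(U\cap\supp\chi).
\]

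The Stokes step is applied to the $N$-form $\omega=\chi\Psi e^{\phi}\,\d Z_{1}\wedge\cdots\wedge \d Z_{N}$. Because $L_{1}Z_{j}=0$ gives $\d Z_{j}=\sum_{k}\partial_{y_{k}}Z_{j}(\d y_{k}-b_{k}\d t)$, one has $\d Z_{1}\wedge\cdots\wedge \d Z_{N}=\det(\partial Z/\partial y)\bigwedge_{k}(\d y_{k}-b_{k}\d t)$ and $L_{1}\phi=(-i\xi+2|\xi|(x-Z))\cdot L_{1}Z=0$. A direct computation then produces the clean identity
\[
\d\omega=\det\!\bigl(\partial Z/\partial y\bigr)\,\bigl[\Psi\,L_{1}\chi+\chi\,L_{1}\Psi\bigr]\,e^{\phi}\,\d t\wedge \d y_{1}\wedge\cdots\wedge \d y_{N}.
\]
Integration over $\{-\delta<t<0\}$ yields $\F[\chi\Psi_{0}](x,\xi)=J(x,\xi,-\delta)+\int_{-\delta}^{0}\partial_{t}J(x,\xi,t)\,\d t$. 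The $L_{1}\chi$-contribution is supported where $y\notin U$, so $|x-y|$ is bounded below for $x\in V$ and $e^{\Re\phi}\leq e^{-c|\xi|}$ is exponentially small. For the $L_{1}\Psi$-contribution, the $(\M,t)$-approximateness of $\Psi$ for $L_{1}$ gives $|L_{1}\Psi(y,t)|\leq A\,h(\gamma|t|)\leq A\,m_{k}(\gamma|t|)^{k}$ for every $k\in\Z_{+}$. Combining with $|J(x,\xi,-\delta)|\leq Ce^{-c_{0}\delta|\xi|}$ and $\int_{0}^{\delta}s^{k}e^{-c_{0}s|\xi|}\d s\leq k!/(c_{0}|\xi|)^{k+1}$,
\[
|\F[\chi\Psi_{0}](x,\xi)|\leq Ce^{-c_{0}\delta|\xi|}+\frac{C_{2}M_{k}\gamma^{k}}{(c_{0}|\xi|)^{k+1}}+Ce^{-c|\xi|},\quad\forall k\in\Z_{+}.
\]

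Finally, since $m_{k}\geq 1$ (which follows from $m_{0}=m_{1}=1$ and log-convexity), one has $M_{k}\geq k!$; the elementary bound $e^{-x}\leq k!/x^{k}$ then shows that both exponentially decaying terms above are $\leq(\text{const})^{k+1}M_{k}/|\xi|^{k}$ for $|\xi|\geq 1$, while $M_{k}\gamma^{k}/(c_{0}|\xi|)^{k+1}\leq(\text{const})^{k+1}M_{k}/|\xi|^{k}$ in the same range; the case $|\xi|<1$ is immediate from the boundedness of $\F$. After adjusting constants, this produces $|\F[\chi\Psi_{0}](x,\xi)|\leq A^{k+1}M_{k}/|\xi|^{k}$ on $V\times\Gamma$ for every $k\in\Z_{+}$, as required. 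The main obstacle is the Stokes computation collapsing $\d\omega$ to $\det\,L_{1}(\chi\Psi e^{\phi})\,\d t\wedge\d y$: the crucial point is that $L_{1}Z_{j}=0$ (available only after invoking Lemma \ref{lem:L1}) expresses $\d Z_{j}$ in the coframe $\{\d y_{k}-b_{k}\d t\}$, which makes $L_{1}\phi$ vanish identically and prevents any $LZ_{j}$-error from contaminating the final bound; once this identity is in place, the remainder is a standard balancing of $|t|^{k}$ against $e^{-c_{0}|t||\xi|}$ as in Asano's smooth argument, with $h(\gamma|t|)$ replacing ordinary $t$-flatness.
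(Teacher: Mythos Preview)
Your argument is correct and follows essentially the same route as the paper: reduce to $L_{1}Z_{j}=0$ via Lemma~\ref{lem:L1}, apply Stokes to the form $\chi\Psi e^{\phi}\,\d Z$ (equivalently $H\,\d Z$ with $\d(H\,\d Z)=(L_{1}H)\,\d t\wedge \d Z$), and then split the resulting three terms, estimating the $L_{1}\Psi$ contribution by trading the $(\M,t)$-flatness bound $m_{k}|t|^{k}$ against $e^{-c_{0}|t||\xi|}$ via $\int_{0}^{\infty}s^{k}e^{-cs|\xi|}\,\d s=k!/(c|\xi|)^{k+1}$ while the $L_{1}\chi$ and boundary pieces decay exponentially. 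The only cosmetic difference is that you integrate over $t\in(-\delta,0)$ and carry the extra $-c_{1}|\xi||x-y|^{2}$ term explicitly, whereas the paper takes $t\in[0,\lambda]$ and defers the phase estimate to \cite{asano:95}; the underlying mechanism is identical.
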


\begin{proof}
Let $\xi_0 \in \R^N$ be such that $\Im \, a(0) \cdot \xi < 0$.
We apply Lemma \ref{lem:L1} and obtain an open neighborhood of the origin $\Omega_1 \subset \Omega$ and a vector field $L_1$ on $\Omega_1$.
We have $\d(H \d Z) = (L_1H)\d t \wedge \d Z$, for every function $H \in \mathcal{C}^1(\Omega_1)$.
Let $B \subset \R^N$ be an open ball around the origin and $I \subset \R$ an open interval around zero such that $B \times I \Subset \Omega_1$.
Choose a cutoff function $\chi \in \Ci_c(B)$ such that $\chi \equiv 1$ on a neighborhood $U \subset B$ of the origin and $0 \leq \chi \leq 1$.
Thus, if we fix an approximate solution $\Psi$ of $L$ and choose $H = H_{y,\xi}$ by the formula\footnote{where $y, \xi$ are parameters.}:
\[
H(x,t) = e^{i \xi \cdot (y - Z(x,t)) - |\xi|\<y - Z(x,t)\>^2}\chi(x)\Psi(x,t), \quad (x,t) \in \Omega_1, 
\]
then, for each positive $\lambda \in I$ we can apply Stokes' theorem and get:
\begin{align*}
\int_{B \times [0,\lambda]}(L_1H) \d t \wedge \d Z &=  
\int_{B \times [0,\lambda]} \chi(x)e^{i \xi \cdot (y - Z(x,t)) - |\xi|\<y - Z(x,t)\>^2}L_1 \left[\Psi(x,t)\right]\d t \wedge \d Z\\
&\quad + 
\int_{B \times [0,\lambda]}e^{i \xi \cdot (y - Z(x,t)) - |\xi|\<y - Z(x,t)\>^2}\Psi(x,t)L_1\left[\chi(x)\right] \d t \wedge \d Z \\
&= \int_{\partial(B \times [0,\lambda])}H \d Z \\
&=\int_{x \in B}e^{i \xi \cdot (y - Z(x,\lambda)) - |\xi|\<y - Z(x,\lambda)\>^2}\chi(x)\Psi(x,\lambda) \d_{x}Z(x,\lambda) \\
& \quad - 
\int_{x \in B}e^{i \xi \cdot (y - Z(x,0)) - |\xi|\<y - Z(x,0)\>^2}\chi(x)\Psi(x,0) \d_{x}Z(x,0), \\
\end{align*}
thus:
\begin{align} \label{est:FBI}
\nonumber |\F[\chi \Psi_0](y,\xi)| &\leq \int_{B \times [0,\lambda]}\chi(x)\left|e^{i \xi \cdot (y - Z(x,t)) - |\xi|\<y - Z(x,t)\>^2}L_1 \left[\Psi(x,t)\right]\right|\d t \wedge \d Z \\ 
&\quad + \int_{B \times [0,\lambda]}\left|e^{i \xi \cdot (y - Z(x,t)) - |\xi|\<y - Z(x,t)\>^2}\Psi(x,t)L_1\left[\chi(x)\right] \right|\d t \wedge \d Z \\ \nonumber
&\quad + \int_{x \in B}\left|e^{i \xi \cdot (y - Z(x,\lambda)) - |\xi|\<y - Z(x,\lambda)\>^2}\chi(x)\Psi(x,\lambda)\right| \d_{x}Z(x,\lambda).
\end{align}
Let $Q(x,t,y,\xi) = i \xi \cdot (y - Z(x,t)) - |\xi|\<y - Z(x,t)\>^2$, then as in \cite{asano:95} there exists an open cone $\Gamma \subset \R^N$, with $\xi_0 \in \Gamma$, an open neighborhood of the origin $V \subset \R^N$, and constants $C_0, \delta > 0$ such that
\[
\Re\,Q(x,t,y,\xi) \leq -C_0t|\xi|/2,
\]
for all $x \in B$, $\xi \in \Gamma$, $y \in V$ and $0 < t < \delta$.
Taking $\delta \in I$ and $V \subset U$, we can estimate:
\begin{multline*}
|\xi|^k\int_{B \times [0,\delta]}\chi(x)\left|e^{i \xi \cdot (y - Z(x,t)) - |\xi|\<y - Z(x,t)\>^2}L_1 \left[\Psi(x,t)\right]\right|\d t \wedge \d Z \\
\leq \int_{B \times [0, \delta]}|\xi|^ke^{-C_0t|\xi|/2}C^k\dfrac{M_{k-1}}{(k-1)!}|t|^{k-1}\sup_{(x,t) \in B \times [0, \delta]}|\det Z_x(x,t)| \, \d t \, \d x \\
= C^k \m(B)\sup_{(x,t) \in B \times [0, \delta]}|\det Z_x(x,t)| \dfrac{M_{k-1}|\xi|^k}{(k-1)!}\int_0^\delta e^{-C_0t|\xi|/2}t^{k-1} \d t\\
\leq C^k \m(B)\sup_{(x,t) \in B \times [0, \delta]}|\det Z_x(x,t)| \dfrac{M_{k-1}|\xi|^k}{(k-1)!}\int_0^\infty e^{-C_0t|\xi|/2}t^{k-1} \d t \\
= C^k \m(B)\sup_{(x,t) \in B \times [0, \delta]}|\det Z_x(x,t)| \dfrac{M_{k-1}|\xi|^k}{(C_0|\xi|/2)^{k}}\\
\leq {\left(\dfrac{2C}{C_0}\right)\!}^k\m(B)\sup_{(x,t) \in B \times [0, \delta]}|\det Z_x(x,t)| M_{k}, \quad \xi \in \Gamma, y \in V.
\end{multline*}
As in \cite{asano:95}, the remaining terms in \eqref{est:FBI} have exponential decay in some conic neighborhood of the origin.
\end{proof}

\begin{thm}\label{thm:sol_approx_linearized}
Let $\Omega = \Omega^\prime \times I \subset \R^{N} \times \R$ be an open neighborhood of the origin and let $\Omega^{\prime\prime} \subset \C^{N+1}$ be an open set.
Let $u \in \mathcal{C}^2(\Omega)$ be a solution of the nonlinear PDE:
\begin{equation}\label{eq:uteqf}
u_t = f(x,t,u,u_x),
\end{equation} 
where $f(x,t,\zeta_0, \zeta)$ is a function of class $\Cm$ with respect to $(x,t) \in \Omega$ and holomorphic with respect to $(\zeta_0,\zeta) \in \Omega^{\prime \prime}$. 
Let $L^u$ be the linearized operator:
\begin{equation}
L^u = \dfrac{\partial}{\partial t} - \sum_{j=1}^N \dfrac{\partial f}{\partial \zeta_j}(x,t,u,u_x)\dfrac{\partial}{\partial x_j}.
\end{equation}
Then for each open set $U \Subset \Omega^\prime$ there exist $\mathcal{C}^1$-functions $Z_j(x,t)$ and $\Psi(x,t)$ that are $(\M, t)$-approximate solutions of $L^u$ on $U \times \R$ with initial data $x_j$ and $u_0 = u(\,\cdot\,, 0)$, respectively, $j = 1, \dots, N$.





\end{thm}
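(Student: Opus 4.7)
The plan is to lift $L^u$ to an auxiliary vector field $\mathcal{L}$ on an enlarged phase space whose coefficients are $\Cm$ in $(x,t)$ and holomorphic in lifted complex variables $(\zeta_0, \zeta)$, apply Theorem~\ref{thm:approx_sol} there, and then specialize back by the substitution $\zeta_0 = u(x,t)$, $\zeta = u_x(x,t)$. Choose an open set $\Omega^{\prime\prime} \subset \C \times \C^N$ large enough that the image $\{(u(x,t), u_x(x,t)) : (x,t) \in \overline{U} \times [-\delta_0, \delta_0]\}$ is contained in a relatively compact subset $V \Subset \Omega^{\prime\prime}$ for some $\delta_0 > 0$, and on $\Omega^\prime \times I \times \Omega^{\prime\prime}$ set
\[
\mathcal{L} = \dfrac{\partial}{\partial t} - \sum_{j=1}^N \dfrac{\partial f}{\partial \zeta_j}(x,t,\zeta_0,\zeta)\, \dfrac{\partial}{\partial x_j} + B_0\, \dfrac{\partial}{\partial \zeta_0} + \sum_{j=1}^N B_j\, \dfrac{\partial}{\partial \zeta_j},
\]
with
\[
B_0 = f - \sum_{k=1}^N \dfrac{\partial f}{\partial \zeta_k} \zeta_k, \qquad B_j = \dfrac{\partial f}{\partial x_j} + \dfrac{\partial f}{\partial \zeta_0} \zeta_j.
\]
Each coefficient of $\mathcal{L}$ is a polynomial in $(\zeta_0, \zeta)$ with $\Cm$ coefficients in $(x,t)$, so $\mathcal{L}$ fits the hypotheses of Theorem~\ref{thm:approx_sol}.

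Applying Theorem~\ref{thm:approx_sol} to $\mathcal{L}$ with initial data $x_j$ and $\zeta_0$ respectively (both real-analytic in $x$ and holomorphic in $(\zeta_0,\zeta)$) yields $\Ci$-functions $\widetilde{Z}_j$ and $\widetilde{\Psi}$ on $U \times \R \times V$, holomorphic in $(\zeta_0, \zeta)$, satisfying $\widetilde{Z}_j(x, 0, \cdot, \cdot) = x_j$, $\widetilde{\Psi}(x, 0, \zeta_0, \zeta) = \zeta_0$, and $|\mathcal{L} \widetilde{Z}_j|, |\mathcal{L} \widetilde{\Psi}| \leq A\, h(Q|t|)$ on $U \times (-\delta, \delta) \times V$. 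Define
\[
Z_j(x,t) := \widetilde{Z}_j(x, t, u(x,t), u_x(x,t)), \qquad \Psi(x,t) := \widetilde{\Psi}(x, t, u(x,t), u_x(x,t)).
\]
Since $u \in \mathcal{C}^2$, these compositions are $\mathcal{C}^1$, and the initial conditions $Z_j(x,0) = x_j$ and $\Psi(x,0) = u(x,0) = u_0(x)$ are immediate from the chosen initial data.

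The crux is the chain-rule identity $L^u Z_j(x,t) = (\mathcal{L} \widetilde{Z}_j)(x, t, u(x,t), u_x(x,t))$, and analogously for $\Psi$. Expanding $L^u Z_j$ produces, besides the pure $(x,t)$-derivative terms that reassemble into the $\partial_t$- and $\partial_{x_j}$-part of $\mathcal{L} \widetilde{Z}_j$, a factor $u_t - \sum_k \tfrac{\partial f}{\partial \zeta_k}\big|_{(u, u_x)} u_{x_k}$ multiplying $\partial_{\zeta_0} \widetilde{Z}_j$ and factors $u_{x_\ell t} - \sum_k \tfrac{\partial f}{\partial \zeta_k}\big|_{(u, u_x)} u_{x_k x_\ell}$ multiplying $\partial_{\zeta_\ell} \widetilde{Z}_j$. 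By the equation $u_t = f(x, t, u, u_x)$ the first factor equals $B_0\big|_{(u, u_x)}$, and differentiating the same equation in $x_\ell$ shows the second equals $B_\ell\big|_{(u, u_x)}$. The required $(\mathcal{M}, t)$-flatness of $L^u Z_j$ and $L^u \Psi$ is therefore inherited from that of $\mathcal{L} \widetilde{Z}_j$ and $\mathcal{L} \widetilde{\Psi}$, evaluated on the compact set swept by $(u, u_x)$. The main obstacle is arranging the lift $\mathcal{L}$ so that this chain-rule identity holds verbatim, which is precisely the reason for the explicit choice of $B_0$ and $B_\ell$; once those are in hand the remainder is bookkeeping, together with the minor point that $(u(x,t), u_x(x,t)) \in V$ for $(x,t) \in U \times (-\delta, \delta)$, which holds after possibly shrinking $\delta$.
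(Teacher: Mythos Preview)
Your argument is correct and, for the construction of $\Psi$, more direct than the paper's. Both you and the paper lift $L^u$ to the same holomorphic Hamiltonian on the enlarged phase space (the paper denotes it $H$; your $\mathcal{L}$ coincides with it), invoke Theorem~\ref{thm:approx_sol} on that lift, and pull back through the chain-rule identity $L^u(\Phi^w)=(H\Phi)^w$ with $w=(u,u_x)$. For the $Z_j$'s the two proofs are identical. For $\Psi$, however, the paper follows Asano's $\Ci$ argument step by step: after producing approximate solutions $\Xi_k$ of $H$ with initial data $\zeta_k$, it takes $\M$-almost analytic extensions of $(Z,\Xi)$ in the $x$-variable, inverts the resulting map by the implicit function theorem to obtain $(P,Q)$, and defines $\Psi$ via $Q_0$ composed with $(\widetilde Z,\widetilde\Xi)$. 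Your observation that one may simply take $\widetilde{\Psi}$ to be the approximate solution of $H$ with initial datum $\zeta_0$, so that $\widetilde{\Psi}^w(x,0)=u(x,0)$ and $|L^u\widetilde{\Psi}^w|=|(H\widetilde{\Psi})^w|\le A\,h(Q|t|)$ directly, bypasses that entire detour. The extra machinery in the paper appears to be carried over from Asano's setting; once Theorem~\ref{thm:approx_sol} is available with arbitrary $\Cm$-holomorphic initial data, your shortcut works, and the paper's own $\Xi_0^w$ already furnishes the desired $\Psi$.

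One small slip: the coefficients of your $\mathcal{L}$ are holomorphic in $(\zeta_0,\zeta)$, not polynomial (e.g.\ $\partial f/\partial\zeta_j$ need not be polynomial). This is harmless, since holomorphy is precisely what Theorem~\ref{thm:approx_sol} requires.
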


\begin{proof}
In this proof we follow closely the proof of the Theorem $4.1$ of \cite{asano:95}. 
Consider the vector field
\begin{equation*}
\L=\frac{\partial}{\partial t}-\sum_{j=1}^N\dfrac{\partial f}{\partial\zeta_j}(x,t,\zeta_0,\zeta)\frac{\partial}{\partial x_j},
\end{equation*}
and the functions
\begin{align*}
h_0(x,t,\zeta_0,\zeta) &= f(x,t,\zeta_0,\zeta) - \sum_{j=1}^N\zeta_j\frac{\partial f}{\partial\zeta_j}(x,t,\zeta_0,\zeta)\\
h_i(x,t,\zeta_0,\zeta) &= \frac{\partial f}{\partial x_i}(x,t,\zeta_0,\zeta) + \zeta_i \frac{\partial f}{\partial\zeta_0}(x,t,\zeta_0,\zeta),\quad i = 1,\dots,N.
\end{align*}
This functions $h_j$ satisfies $h(x,u(x,t))=L^{u} w(x,t)$, where $w(x,t)=(u(x,t),u_x(x,t))$.
We can introduce now the holomorphic Hamiltonian 
\begin{equation*}
H=\L+h_0\frac{\partial}{\partial\zeta_0}+\sum_{j=1}^Nh_j\frac{\partial}{\partial\zeta_j}.
\end{equation*}
So it follows as in \cite{treves:92} that for every $\Phi(x,t,\zeta_0,\zeta)$ a $\Ci$-function,
\begin{equation}\label{eq:cadeia}
\L^w\Phi^w=\left(H\Phi\right)^w,
\end{equation}
\noindent and $\L^w=L^{v}$, with the notation $\Phi^w(x,t)=\Phi(x,t,w(x,t))$. 
Let $U\Subset \Omega^\prime$ be an open neighborhood of the origin and let $V\Subset \Omega^{\prime\prime}$ be an open neighborhood of $w(0,0)=(u(0,0),u_x(0,0))$ such that $w(x,t)\in V$ for all $(x,t)\in U$.
Applying Theorem \ref{thm:approx_sol} there exist functions $Z_j(x,t,\zeta_0,\zeta)$, $\Xi_k(x,t,\zeta_0,\zeta)$, $j=1,\dots,N$ and $k=0,1,\dots,N$, $\mathcal{C}^\infty$ in $(x,t)$ and holomorphic in $(\zeta_0,\zeta)$, $(\mathcal{M},t)$-approximate solutions of $H\Phi=0$ on $U\times\R\times V$ with initial conditions $Z_j(x,0,\zeta_0,\zeta)=x_j$, for $j=1,\dots,N$ and $\Xi_k(x,0,\zeta_0,\zeta)=\zeta_k$, for $k=0,1,\dots,N$. So there are constants $C_1,\rho,\delta>0$ such that
\begin{equation*}
\begin{cases}
\left|HZ_j(x,t,\zeta_0,\zeta)\right|\leq C_1h(\rho|t|),\quad\forall j=1,\dots,N,\\
\left|H\Xi_k(x,t,\zeta_0,\zeta)\right|\leq C_1h(\rho|t|),\quad\forall k=0,\dots,N,
\end{cases}
\end{equation*}
\noindent for $(x,\zeta_0,\zeta)\in U\times V$ and $|t|\leq \delta$.
The identity \eqref{eq:cadeia} implies that $Z_j^w(x,t)$ is an $(\M,t)$-approximate solution of $\L^w$ with initial condition $Z_j^w(x,0)=x_j$, for $j=1,\dots, N$. 
So it only remains to find an approximate solution of $\L^w$ with initial condition $u_0$.
Let $\widetilde{Z}(z,\overline{z},t,\zeta_0,\zeta)$ and $\widetilde\Xi(z,\overline{z},t,\zeta_0,\zeta)$ be $\mathcal{M}$-almost holomorphic extensions of $Z(x,t,\zeta_0,\zeta)$ and $\Xi(x,t,\zeta_0,\zeta)$ on $U\times\R\times V$, see \cite{dyn'kin}. Note that $\widetilde{Z}(z,\overline{z},t,\zeta_0,\zeta)$ and $\widetilde{\Xi}(z,\overline{z},t,\zeta_0,\zeta) $ are both holomorphic in $(\zeta_0,\zeta)$. Than there are positive constants $C_2,\gamma$ such that, shrinking $\delta$ if necessary, 

\begin{equation}\label{eq:ext_alm_hol}
\begin{dcases}
\left|\dfrac{\partial \widetilde{Z}_l}{\partial\overline{z}_j}(z,\overline{z},t,\zeta_0,\zeta)\right|\leq C_2h(\gamma|\Im \,z_j|),\quad\forall j,l=1,\dots,N\\
\left|\dfrac{\partial\widetilde{\Xi}_k}{\partial\overline{z}_j}(z,\overline{z},t,\zeta_0,\zeta)\right|\leq C_2h(\gamma|\Im \,z_j|),\quad\forall j=1,\dots,N,\,k=0,\dots,N,
\end{dcases}
\end{equation}
\noindent for $(z,\zeta_0,\zeta)\in \left(U+iB_\delta(0)\right)\times V$ and $|t|<\delta$.
Since 
\[\dfrac{\partial(\widetilde{Z},\overline{\widetilde{Z}},\widetilde{\Xi},\overline{\widetilde{\Xi}})}{\partial(z,\overline{z},\zeta_0,\overline{\zeta_0},\zeta,\overline{\zeta})}(z,\overline{z},t,\zeta_0,\zeta)\]
is non-singular if $t=0$ and $\Im\,z=0$, shrinking if necessary $U$, $V$ and $\delta$, one can use the implicit function theorem to solve
\[\left\{
\begin{array}{ll}
\widetilde{Z}(z,\overline{z},t,\zeta_0,\zeta)&=\widetilde{z}\\
\widetilde{\Xi}(z,\overline{z},t,\zeta_0,\zeta)&=\widetilde{\zeta},
\end{array}\right.\]
with respect to $(z,\zeta_0,\zeta)$ in $\left(U+iB_\delta(0)\right)\times V$. So there are two $\mathcal{C}^{\infty}$ functions  $P$ and $Q$ 
such that
\begin{equation*}
\begin{cases}
z=P(\widetilde{z},\overline{\widetilde{z}},t,\widetilde{\zeta},\overline{\widetilde{\zeta}})\\
(\zeta_0,\zeta)=Q(\widetilde{z},\overline{\widetilde{z}},t,\widetilde{\zeta},\overline{\widetilde{\zeta}}),
\end{cases}
\end{equation*}

\noindent with $P(0,0,\zeta_0,\zeta)=0$ and $Q(0,0,u(0),u_x(0))=(u(0),u_x(0))$. Combining this four equations we obtain

\begin{equation}\label{eq:sist_implic}
\begin{cases}
\widetilde{Z}(P(\widetilde{z},\overline{\widetilde{z}},t,\widetilde{\zeta}),\overline{P(\widetilde{z},\overline{\widetilde{z}},t,\widetilde{\zeta})},t,Q(\widetilde{z},\overline{\widetilde{z}},t,\widetilde{\zeta},\overline{\widetilde{\zeta}}))=\widetilde{z}\\
\widetilde{\Xi}(P(\widetilde{z},\overline{\widetilde{z}},t,\widetilde{\zeta}),\overline{P(\widetilde{z},\overline{\widetilde{z}},t,\widetilde{\zeta})},t,Q(\widetilde{z},\overline{\widetilde{z}},t,\widetilde{\zeta},\overline{\widetilde{\zeta}}))=\widetilde{\zeta}.
\end{cases}
\end{equation}

\noindent Differentiating the system \eqref{eq:sist_implic} with respect to $\overline{\widetilde{z}}$  we obtain 
\begin{align*}
&\dfrac{\partial(\widetilde{Z},\widetilde{\Xi})}{\partial(z,\zeta_0,\zeta)}(P(\widetilde{z},\overline{\widetilde{z}},t,\widetilde{\zeta},\overline{\widetilde{\zeta}}),\overline{P(\widetilde{z},\overline{\widetilde{z}},t,\widetilde{\zeta},\overline{\widetilde{\zeta}})},t,Q(\widetilde{z},\overline{\widetilde{z}},t,\widetilde{\zeta},\overline{\widetilde{\zeta}}))\dfrac{\partial(P,Q)}{\partial\overline{\widetilde{z}}}(\widetilde{z},\overline{\widetilde{z}},t,\widetilde{\zeta},\overline{\widetilde{\zeta}})\\
&+\dfrac{\partial(\widetilde{Z},\widetilde{\Xi})}{\partial(\overline{z},\overline{\zeta_0},\overline{\zeta})}(P(\widetilde{z},\overline{\widetilde{z}},t,\widetilde{\zeta},\overline{\widetilde{\zeta}}),\overline{P(\widetilde{z},\overline{\widetilde{z}},t,\widetilde{\zeta},\overline{\widetilde{\zeta}})},t,Q(\widetilde{z},\overline{\widetilde{z}},t,\widetilde{\zeta},\overline{\widetilde{\zeta}}))\dfrac{\partial(\overline{P},\overline{Q})}{\partial\overline{\widetilde{z}}}(\widetilde{z},\overline{\widetilde{z}},t,\widetilde{\zeta},\overline{\widetilde{\zeta}})=0.
\end{align*}
Let $A(z,s,\zeta_0,\zeta)$ be a generic entry of the matrix
\[\dfrac{\partial(\widetilde{Z},\widetilde{\Xi})}{\partial(\overline{z},\overline{\zeta_0},\overline{\zeta})}(z,\overline{z},t,\zeta_0,\zeta).\]

\noindent From the estimates \eqref{eq:ext_alm_hol} and that $\widetilde{Z}$ and $\widetilde{\Xi}$ are holomorphic in $(\zeta_0,\zeta)$ follows that
\[|A(z,t,\zeta_0\zeta)|\leq C_3 h(\gamma|\Im\,z|),\quad\forall (z,t,\zeta_0,\zeta)\in (U+iB_\delta(0))\times(-\delta,\delta)\times V,\]

\noindent for some positive constant $C_3$.
Since the (complex) matrix 
\[\dfrac{\partial(\widetilde{Z},\widetilde{\Xi})}{\partial(z,\zeta_0,\zeta)}(z,\overline{z},t,\zeta_0,\zeta)\]
is invertible for $\Im\,z=0$ and $t=0$, it follows that (shrinking $U,V$ and $\delta$ if necessary) 
\begin{equation}\label{est:delbarzQ0}
\left|\dfrac{\partial Q_0}{\partial\overline{\widetilde{z}}_j}(\widetilde{z},\overline{\widetilde{z}},t,\widetilde{\zeta})\right|\leq C_4h(\gamma|\Im \, P(\widetilde{z},\overline{\widetilde{z}},t,\widetilde{\zeta},\overline{\widetilde{\zeta}})|),\quad \forall j=1,\dots,N.
\end{equation}

\noindent for some positive constants $C_4$.
Analogously, differentiating the system \eqref{eq:ext_alm_hol} with respect to $\overline{\widetilde{\zeta}}$ and reasoning as before we have
\begin{equation}\label{est:delbarzetaQ0}
\left|\dfrac{\partial Q_0}{\partial\overline{\widetilde{\zeta}}_j}(\widetilde{z},\overline{\widetilde{z}},t,\widetilde{\zeta})\right|\leq C_5h(\gamma|\Im \,P(\widetilde{z},\overline{\widetilde{z}},t,\widetilde{\zeta},\overline{\widetilde{\zeta}})|),\quad \forall j=0,\dots,N,
\end{equation}
For some positive constants $C_5$.
Define the function $\Psi(z,\overline{z},t,\zeta_0,\zeta)$ for $(z,\overline{z},t,\zeta_0,\zeta)\in (U+iB_\delta(0))\times(-\delta,\delta)\times V$ by
\[\Psi(z,\overline{z},t,\zeta_0,\zeta,\overline{\zeta_0},\overline{\zeta})=Q_0\left(\widetilde{Z}(z,\overline{z},t,\zeta_0,\zeta),\overline{\widetilde{Z}(z,\overline{z},t,\zeta_0,\zeta)},0,\widetilde{\Xi}(z,\overline{z},t,\zeta_0,\zeta),\overline{\widetilde{\Xi}(z,\overline{z},t,\zeta_0,\zeta)}\right).\]
And by the definition of $\Psi$ we have
\begin{align*}
\Psi^w(x,0)&=\Psi(x,0,w(x,0))\\
&=\Psi(x,0,v(x,0),v_x(x,0))\\
&=Q_0(x,0,v(x,0),v_x(x,0))\\
&=v(x,0).
\end{align*}
Note that $H$ has no derivatives on $\Im \, z$, so $H\widetilde{Z}(x,t,\zeta_0,\zeta)=HZ(x,t,\zeta_0,\zeta)$ and the same happens for $H\widetilde{\Xi}$ at $\Im\,z =0$.
We have:
\[H\Psi=\sum_{j=1}^{N}\left(\dfrac{\partial Q_0}{\partial\widetilde{z}_j}H\widetilde{Z}_j+\dfrac{\partial Q_0}{\partial\overline{\widetilde{z}}_j}H\overline{\widetilde{Z}}_j\right)+\sum_{k=0}^{N}\left(\dfrac{\partial Q_0}{\partial\widetilde{\zeta}_k}H\widetilde{\Xi}_k+\dfrac{\partial Q_0}{\partial\overline{\widetilde{\zeta}}_k}H\overline{\widetilde{\Xi}}_k\right).
\]
and also
\begin{align*}
P(x,0,\zeta_0,\zeta,\overline\zeta_0,\overline{\zeta})&=P(\widetilde{Z}(x,0,\zeta_0,\zeta_0),\overline{\widetilde{Z}(x,0,\zeta_0,\zeta_0)},0,\widetilde{\Xi}(x,0,\zeta_0,\zeta),\overline{\widetilde{\Xi}(x,0,\zeta_0,\zeta)})\\
&=x,
\end{align*}
so 
\begin{equation*}
\Im \, P(\Re \, \widetilde{z},0,\widetilde\zeta,\overline{\widetilde\zeta}) = 0.
\end{equation*}
By the mean value inequality,
\begin{align*}
|\Im \,P(\widetilde{z},\overline{\widetilde{z}},0,\widetilde\zeta,\overline{\widetilde\zeta})|&=|\Im \,P(\widetilde{z},\overline{\widetilde{z}},0,\widetilde\zeta,\overline{\widetilde\zeta})-\Im \,P(\Re \, \widetilde{z},0,\widetilde\zeta,\overline{\widetilde\zeta})|\\
&\leq C_6|\Im\,\widetilde{z}|,
\end{align*}
For some positive constant $C_6$.
On the other hand, since 
\[\widetilde{Z}(x,0,\zeta_0,\zeta)=x,\]
there is $C_7>0$ such that 
\[|\Im\,\widetilde{Z}(x,t,\zeta_0,\zeta)\leq C_7|t|,\quad (x,t,\zeta_0,\zeta)\in U\times(-\delta,\delta)\times V.\]
Combining this two estimates with \eqref{est:delbarzQ0} and \eqref{est:delbarzetaQ0}, taking $C=\max C_j$ we obtain
\[\left|\dfrac{\partial Q_0}{\partial \overline{\widetilde{z}}_j}(\widetilde{Z}(x,t,\zeta_0,\zeta),\overline{\widetilde{Z}(x,t,\zeta_0,\zeta)},0,\widetilde{\Xi}(x,t,\zeta_0,\zeta))\right|\leq Ch(\gamma|t|),\quad (x,t,\zeta_0,\zeta)\in U\times(-\delta,\delta)\times V,\]
\noindent and 
\[\left|\dfrac{\partial Q_0}{\partial \overline{\widetilde{\zeta}}_j}(\widetilde{Z}(x,t,\zeta_0,\zeta),\overline{\widetilde{Z}(x,t,\zeta_0,\zeta)},0,\widetilde{\Xi}(x,t,\zeta_0,\zeta))\right|\leq Ch(\gamma|t|),\quad (x,t,\zeta_0,\zeta)\in U\times(-\delta,\delta)\times V.\]
Summing up we have
\[|H\Psi(x,t,\zeta_0,\zeta)|\leq Ch(\gamma|t|),\quad (x,t,\zeta_0,\zeta)\in U\times(-\delta,\delta)\times V.\]
So in view of equation \eqref{eq:cadeia} we have
\begin{equation*}
\begin{cases}
|\mathcal{L}^w\Psi^w(x,t)|\leq Ch(\gamma|t|),\quad (x,t)\in U\times (-\delta,\delta),\\
\Psi^w(x,0)=u(x,0), \quad x\in U.
\end{cases}
\end{equation*}
Thus we have constructed an $(\M,t)$-approximate solution of $\mathcal{L}^w$ with initial condition $u_0$.
\end{proof}

\begin{thm} \label{thm:WF_v0}
Let $\Omega = \Omega^\prime \times I \subset \R^{N} \times \R$ be an open neighborhood of the origin.
Let $v \in \mathcal{C}^2(\Omega)$ be a solution of the nonlinear PDE:
\begin{equation} \label{eq:vt}
v_t = g(x,v,v_x),
\end{equation} 
where $g(x, \zeta_0, \zeta)$ is a function of class $\Cm$ with respect to $x \in \Omega^\prime$ and holomorphic with respect to $(\zeta_0,\zeta) \in \C \times \C^N$.
Then:
\begin{equation}\label{eq:inclusao}
\left.\WF_\M(v_0)\right|_0\subset \big\{(0,\xi) \in \Omega^\prime \times \R^N \,:\,\Im \, b(0) \cdot \xi \geq 0 \big\},
\end{equation}
where $v_0 \in \mathcal{C}^2(\Omega^\prime)$ is given by $v_0(x) = v(x,0)$, $x \in \Omega^\prime$, and $b(x) = \nabla_\zeta g(x, v_0(x), v_{0x}(x))$.
\end{thm}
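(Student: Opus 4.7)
The approach is the standard Hanges--Treves strategy adapted to the Denjoy--Carleman setting: combine the existence of approximate solutions provided by Theorem \ref{thm:sol_approx_linearized} with the FBI decay produced by Lemma \ref{lem:FBI} and the FBI characterization of $\Cm$-regularity recalled in Section \ref{sec:denjoy}. The PDE $v_t = g(x, v, v_x)$ fits the hypotheses of Theorem \ref{thm:sol_approx_linearized} upon taking $f(x,t,\zeta_0,\zeta) = g(x,\zeta_0,\zeta)$, so for any open neighborhood $U \Subset \Omega'$ of the origin I obtain $\mathcal{C}^1$-functions $Z_1,\dots,Z_N$ and $\Psi$ on $U\times\R$ which are $(\M,t)$-approximate solutions of the linearized operator $L^v$, satisfying $Z_j(x,0) = x_j$ and $\Psi(x,0) = v_0(x)$.

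Next, fix a direction $\xi_0 \in \R^N\setminus\{0\}$ lying outside the claimed set, i.e.\ with $\Im\, b(0)\cdot\xi_0 < 0$; the goal is to show $\xi_0 \notin \WF_\M(v_0)|_0$. Write
\[
L^v = \dfrac{\partial}{\partial t} + \sum_{j=1}^N a_j(x,t)\dfrac{\partial}{\partial x_j}, \qquad a_j(x,t) = -\dfrac{\partial g}{\partial \zeta_j}(x,v(x,t),v_x(x,t)),
\]
so that $a(x,0) = -b(x)$. Because $L^v Z_j \approx 0$, the identity $\partial_t Z_j(x,0) = -a_j(x,0) = b_j(x)$ gives $\Im Z_j(x,t) \approx t\,\Im b_j(x)$ near $t = 0$, and therefore $\xi \cdot \Im Z(x,t)\approx t\,\xi\cdot \Im b(x)$; the inequality $\Im b(0)\cdot \xi_0 < 0$ is precisely what the Stokes' theorem argument in the proof of Lemma \ref{lem:FBI} needs in order to produce $\Re\, Q \leq -C_0 t|\xi|/2$ for $\xi$ in a conic neighborhood of $\xi_0$, $x$ near $0$, and $t > 0$ small. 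Applying Lemma \ref{lem:FBI} therefore yields an open cone $\Gamma \ni \xi_0$, an open neighborhood $V$ of the origin, a cutoff $\chi \in \Ci_c(\R^N)$ equal to $1$ near the origin, and a constant $A > 0$ such that
\[
|\F[\chi\,\Psi_0](y,\xi)| \leq \dfrac{A^{k+1}M_k}{|\xi|^k}, \qquad (y,\xi) \in V\times\Gamma,\ k\in\Z_+.
\]
Since $\Psi_0 = v_0$ and $\chi \equiv 1$ near $0$, this is the FBI decay for $\chi v_0$ in the cone $\Gamma$ around $\xi_0$.

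Finally, by the FBI characterization of $\Cm$-regularity recalled in Section \ref{sec:denjoy} (following \cite{furdos}), the above decay microlocalized at the origin shows that $\xi_0$ is a $\Cm$-regular direction for $v_0$ at $0$, i.e.\ $\xi_0 \notin \WF_\M(v_0)|_0$. Since $\xi_0$ was arbitrary with $\Im b(0)\cdot\xi_0 < 0$, the inclusion \eqref{eq:inclusao} follows. The main subtlety is bookkeeping the sign conventions when passing from the form $L = \partial_t + \sum a_j\partial_{x_j}$ used in Lemma \ref{lem:FBI} to the linearized operator $L^v = \partial_t - \sum b_j\partial_{x_j}$, in order to verify that the hypothesis on $\xi_0$ in Lemma \ref{lem:FBI} corresponds exactly to the condition $\Im b(0)\cdot\xi_0 < 0$ on the complement of the claimed wave-front set; once this identification is made, the argument reduces to assembling the results already established in the paper.
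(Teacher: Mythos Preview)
Your proposal is correct and follows exactly the paper's approach: the paper's own proof is the single line ``Just apply Lemma \ref{lem:FBI} with Theorem \ref{thm:sol_approx_linearized},'' and you have simply unpacked that application, including the sign bookkeeping between $a=-b$ and the hypothesis of Lemma \ref{lem:FBI}. Your final appeal to the FBI characterization from \cite{furdos} is likewise what the paper uses implicitly.
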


\begin{proof}
Just apply Lemma \ref{lem:FBI} with Theorem \ref{thm:sol_approx_linearized}.
\end{proof}

Applying a technique of Hanges-Treves presented  in \cite{treves:92}, we have the regularity theorem as a consequence of Theorem \ref{thm:WF_v0}:
\begin{thm}
Let $\Omega = \Omega^\prime \times I\times\Omega^{\prime\prime} \subset \R^{N} \times \R\times \C^N$, where $\Omega^\prime\times \R$ is an open neighborhood of the origin and $\Omega^{\prime\prime}$ is an open set.
Let $u \in \mathcal{C}^2(\Omega)$ be a solution of the nonlinear PDE:
\begin{equation}
u_t = f(x,t,u,u_x),
\end{equation} 
where $f(x,t,\zeta_0, \zeta)$ is a function of class $\Cm$ with respect to $(x,t) \in \Omega$ and holomorphic with respect to $(\zeta_0,\zeta) \in \C \times \C^N$.
Then:
\[
\WF_\M(u) \subset \Char(L^u),
\]
where $L^u$ is the linearized operator:
\begin{equation}
L^u = \dfrac{\partial}{\partial t} - \sum_{j=1}^N \dfrac{\partial f}{\partial \zeta_j}(x,t,u,u_x)\dfrac{\partial}{\partial x_j}.
\end{equation}
\end{thm}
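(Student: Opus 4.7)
The plan is to deduce this microlocal regularity result from Theorem \ref{thm:WF_v0} by invoking the reduction strategy of Hanges--Treves \cite{treves:92}. Fix a point $p_0 = (x_0, t_0) \in \Omega^\prime \times I$ and a covector $(\xi_0, \tau_0) \notin \Char(L^u)|_{p_0}$; the goal is to show $(p_0, \xi_0, \tau_0) \notin \WF_\M(u)$. First, by translation in $(x, t)$, I would reduce to $p_0 = (0, 0)$; the translated equation retains the same form and regularity class. The degenerate case $\xi_0 = 0$ (which then forces $\tau_0 \neq 0$) will be handled separately by bootstrapping $t$-regularity from the PDE $u_t = f(x, t, u, u_x)$, so I may assume $\xi_0 \neq 0$ henceforth.

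Second, I would perform a linear change of variables $\phi(x, t) = (x - \lambda t, t)$, with the real vector $\lambda$ chosen so that $\lambda \cdot \xi_0 = -\tau_0$. Under $\phi$, the covector $(\xi_0, \tau_0)$ for $u$ at $(0, 0)$ corresponds to the purely spatial covector $(\xi_0, 0)$ for the transported solution $v(x', t') = u(x' + \lambda t', t')$, which satisfies the nonlinear equation $v_{t'} = F(x', t', v, v_{x'})$ with $F(x', t', \zeta_0, \zeta) = \lambda \cdot \zeta + f(x' + \lambda t', t', \zeta_0, \zeta)$; this $F$ is still $\Cm$ in $(x', t')$ and holomorphic in $(\zeta_0, \zeta)$. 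The non-characteristic condition is preserved, so it suffices to treat the case $\tau_0 = 0$.

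With this reduction in hand, I would apply Theorem \ref{thm:WF_v0} to $v$, and separately to the time-reversed equation for $\tilde v(x', t') = v(x', -t')$ in order to recover the opposite half-space, obtaining the spatial trace regularity $\xi_0 \notin \WF_\M(v(\cdot, 0))|_0$ for any $\xi_0$ in the complement of the projected characteristic variety.

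The final and main step will be to lift this spatial trace regularity to the full space-time regularity $(\xi_0, 0) \notin \WF_\M(v)|_{(0,0)}$; pulling back via $\phi$ would then yield $(\xi_0, \tau_0) \notin \WF_\M(u)|_{(0,0)}$ and finish the proof. This lift is precisely the content of the Hanges--Treves argument in \cite{treves:92}, which analyzes the $(N+1)$-dimensional FBI transform of $v$ directly and uses the PDE to reduce its $t'$-integration to a spatial FBI transform of the trace. The hardest part, therefore, will be translating this FBI analysis into the Denjoy--Carleman setting; but all required ingredients --- the FBI characterization of $\WF_\M$ from \cite{furdos}, the $(\mathcal{M}, t)$-approximate solutions provided by Theorem \ref{thm:approx_sol}, and Dyn'kin-type almost-analytic extensions --- are already in place from the preceding sections, so the adaptation should proceed largely in parallel with \cite{treves:92}.
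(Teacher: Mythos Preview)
Your overall structure diverges from the paper's proof in a way that leaves a real gap. The paper's Hanges--Treves reduction is \emph{not} a change of real variables followed by a trace-to-full-regularity lift. Instead, one introduces an \emph{extra} variable $s$ and observes that $v(x,t,s)\coloneqq u(x,t)$ solves, for every $\theta\in[0,2\pi)$, the autonomous-in-$s$ equation
\[
v_s=f^\theta(x,t,v,v_x,v_t),\qquad f^\theta(x,t,\zeta_0,\zeta,\zeta_{N+1})=e^{-i\theta}\bigl(\zeta_{N+1}-f(x,t,\zeta_0,\zeta)\bigr),
\]
with $(x,t)$ playing the role of the spatial variables and $s$ the role of time. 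Since $v$ does not depend on $s$, the trace $v|_{s=0}$ is $u$ itself, so Theorem~\ref{thm:WF_v0} applied to this equation gives an inclusion for $\WF_\M(u)|_0$ directly, with no lifting step required. The parameter $\theta$ is then varied so that, for any $(\xi_0,\tau_0)\notin\Char(L^u)|_0$, the half-space condition of Theorem~\ref{thm:WF_v0} excludes $(\xi_0,\tau_0)$.

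In your outline, by contrast, Step~5 is the crux and is not supplied: knowing $\xi_0\notin\WF_\M\bigl(v(\cdot,0)\bigr)|_0$ for the single trace at $t'=0$ does not by itself yield $(\xi_0,0)\notin\WF_\M(v)|_{(0,0)}$; this would be a genuine propagation/lifting result that is neither stated in the paper nor contained in the Hanges--Treves argument you cite. There are two secondary issues as well. First, Theorem~\ref{thm:WF_v0} is stated for a right-hand side $g(x,\zeta_0,\zeta)$ independent of time; your transformed equation $v_{t'}=F(x',t',v,v_{x'})$ is not of this form, whereas in the paper's reduction $f^\theta$ is genuinely independent of the new ``time'' $s$. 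Second, your real change of variables only moves $\tau_0$ and cannot manufacture $\Im b(0)\cdot\xi_0\neq0$; when $(\xi_0,\tau_0)\notin\Char(L^u)|_0$ with $\Im a(0)\cdot\xi_0=0$ but $\tau_0+\Re a(0)\cdot\xi_0\neq0$, Theorem~\ref{thm:WF_v0} applied to $v$ and to $\tilde v$ gives the vacuous inclusion $\{\Im b(0)\cdot\xi=0\}$ on both sides. The paper's $\theta$-rotation is exactly what converts this real-part obstruction into a usable imaginary-part inequality.
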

For the convenience of the reader we present Hanges-Treves' argument.
We shall prove:
\[
\WF_\M(u)|_0 \subset \Char(L^u)|_0.
\]
The direction $(0; \xi, \tau) \in \Omega \times (\R^{N} \times \R)$ belongs to $\Char(L^u)$ if and only if:
\begin{equation} \label{eq:char}
\begin{cases}
\tau = - \Re \, a(0) \cdot \xi,\\
0 = \Im \, a(0) \cdot \xi,
\end{cases}
\end{equation}
where $a(x,t) = \nabla_{\zeta}f(x,t,u(x,t),u_x(x,t))$.
For each $\theta \in [0, 2\pi)$ one can see that $v(x,t,s) = u(x,t)$ is a $\mathcal{C}^2$-solution of the following nonlinear PDE:
\begin{equation} \label{eq:vs}
v_s = f^\theta(x,t,v,v_x,v_t),
\end{equation}
where $f^\theta(x,t,\zeta_0,\zeta,\zeta_{N+1}) = e^{-i\theta}(\zeta_{N+1} - f(x,t,\zeta_0,\zeta))$ and we are setting the coordinates in $\R^N \times \R \times \R$ as $(x,t,s) = (x_1, \dots,x_N, t, s)$ and the coordinates in $\C \times \C^N \times \C$ as $(\zeta_0, \zeta, \zeta_{N+1}) = (\zeta_0, \zeta_1, \dots, \zeta_N, \zeta_{N+1})$.
The corresponding linearized operator is: 
\[
L^\theta = \dfrac{\partial}{\partial s} - e^{-i\theta}L^u.
\]
The direction $(0;\xi,\tau,\sigma) \in (\Omega \times \R) \times (\R^N \times \R \times \R)$ belongs to $\Char(L^\theta)$ if and only if:
\begin{equation} \label{eq:chartheta}
\begin{cases}
\sigma = [(\cos\theta)\Re \, a(0) - (\sin\theta)\Im \, a(0)] \cdot \xi + (\cos\theta) \tau,\\
    0 = [(\cos\theta)\Im \, a(0) + (\sin\theta)\Re \, a(0)]\cdot \xi + (\sin\theta)\tau.
\end{cases}
\end{equation}
One can notice that the validity of \eqref{eq:char} is equivalent to the validity of the second equation on \eqref{eq:chartheta} for every $\theta \in [0,2\pi)$.
Now, let $(0; \xi_0, \tau_0) \notin \Char(L^u)$.
There exists $\theta \in [0, 2\pi)$ such that $[(\cos\theta)\Im \, a(p) + (\sin\theta)\Re \, a(p)]\cdot \xi_0 + (\sin\theta)\tau_0 \neq 0$. 
By choosing among $\theta$, $\theta + \pi$ and $\theta - \pi$, one can suppose $[(\cos\theta)\Im \, a(p) + (\sin\theta)\Re \, a(p)] \cdot \xi_0 + (\sin\theta)\tau_0 < 0$.
Applying Theorem \ref{thm:WF_v0} to the solution $v(x,t,s) = u(x,t)$ of \eqref{eq:vs} we conclude $(0;\xi_0,\tau_0) \notin \WF_\M(v_0)|_0 = \WF_\M(u)|_0$.

\def\cprime{$'$}

\end{document}